\newlength{\bibitemsep}\setlength{\bibitemsep}{.2\baselineskip plus .05\baselineskip minus .05\baselineskip}
\newlength{\bibparskip}\setlength{\bibparskip}{0pt}
\let\oldthebibliography\thebibliography
\renewcommand\thebibliography[1]{%
  \oldthebibliography{#1}%
  \setlength{\parskip}{\bibitemsep}%
  \setlength{\itemsep}{\bibparskip}%
}
\newcommand{\RR}{\mathbb{R}}
\newcommand{\NN}{\mathbb{N}}
\newcommand{\CC}{\mathbb{C}}
\newcommand{\ZZ}{\mathbb{Z}}
\newcommand{\Op}{\Omega_{\text{punc}}}
\newcommand{\Rp}{\mathcal{R}_{\text{punc}}}
\newcommand{\CB}{C^*_B}
\newcommand{\M}{\mathbb{M}}
\newcommand{\I}{\mathcal{I}}
\newcommand{\p}{\mathcal{P}}
\newcommand{\J}{\mathcal{J}}
\newcommand{\g}{\mathcal{G}}
\newcommand{\Ef}{\widehat E_0}
\newcommand{\Eu}{\widehat E_\infty}
\newcommand{\Et}{\widehat E_{\text{tight}}}
\newcommand{\Ct}{C^*_{\text{tight}}}
\newcommand{\Cr}{C^*_{\text{red}}}
\newcommand{\gt}{\mathcal{G}_{\text{tight}}}
\newcommand{\fs}{\subset_{\text{fin}}}
\setlist[enumerate]{itemsep=-1pt}
\setlist[itemize]{itemsep=-1pt}
\theoremstyle{definition}
\newtheorem{theo}{Theorem}[section]
\newtheorem{rmk}[theo]{Remark}
\newtheorem{ex}[theo]{Example}
\newtheorem{lem}[theo]{Lemma}
\newtheorem{defn}[theo]{Definition}
\newtheorem{cor}[theo]{Corollary}
\newtheorem{prop}[theo]{Proposition}
\date{}
\begin{document}
\title{C*-algebras of Boolean inverse monoids -- traces and invariant means}
\author{Charles Starling\thanks{Supported by the NSERC grants of Beno\^it Collins, Thierry Giordano, and Vladimir Pestov. \texttt{cstar050@uottawa.ca}.}}
\maketitle

\begin{abstract}
To a Boolean inverse monoid $S$ we associate a universal C*-algebra $C^*_B(S)$ and show that it is equal to Exel's tight C*-algebra of $S$. We then show that any invariant mean on $S$ (in the sense of Kudryavtseva, Lawson, Lenz and Resende) gives rise to a trace on $C^*_B(S)$, and vice-versa, under a condition on $S$ equivalent to the underlying groupoid being Hausdorff. Under certain mild conditions, the space of traces of $C^*_B(S)$ is shown to be isomorphic to the space of invariant means of $S$. We then use many known results about traces of C*-algebras to draw conclusions about invariant means on Boolean inverse monoids; in particular we quote a result of Blackadar to show that any metrizable Choquet simplex arises as the space of invariant means for some AF inverse monoid $S$.
\end{abstract}

\section{Introduction}
This article is the continuation of our study of the relationship between inverse semigroups and C*-algebras. An {\em inverse semigroup} is a semigroup $S$ for which every element $s\in S$ has a unique ``inverse'' $s^*$ in the sense that 
\[
ss^*s = s \text{ and }s^*ss^* = s^*.
\]
An important subsemigroup of any inverse semigroup is its set of idempotents $E(S) = \{e\in S\mid e^2 = e\} = \{s^*s\mid s\in S\}$. Any set of partial isometries closed under product and involution inside a C*-algebra is an inverse semigroup, and its set of idempotents forms a commuting set of projections. Many C*-algebras $A$ have been profitably studied in the following way:
\begin{enumerate}
\item identify a generating inverse semigroup $S$,
\item write down an abstract characterization of $S$,
\item show that $A$ is universal for some class of representations of $S$.
\end{enumerate}
We say ``some class'' above because typically considering all representations (as in the construction of Paterson \cite{Pa99}) gives us a larger C*-algebra than we started with. For example, consider the multiplicative semigroup inside the Cuntz algebra $\mathcal{O}_2$ generated by the two canonical generators $s_0$ and $s_1$; in semigroup literature this is usually denoted $P_2$ and called the {\em polycyclic monoid} of order 2. The C*-algebra which is universal for all representations of $P_2$ is $\mathcal{T}_2$, the Toeplitz extension of $\mathcal{O}_2$. In an effort to arrive back at the original C*-algebra in cases such as this, Exel defined the notion of {\em tight} representations \cite{Ex08}, and showed that the universal C*-algebras for tight representations of $P_2$ is $\mathcal{O}_2$. See \cite{ShiftISG}, \cite{StLCM}, \cite{EP16}, \cite{EP14a}, \cite{EGS12}, \cite{COP15} for other examples of this approach.

Another approach to this issue is to instead alter the inverse semigroup $S$. An inverse semigroup carries with it a natural order structure, and when an inverse semigroup $S$ is represented in a C*-algebra $A$, two elements $s, t\in S$, which did not have a lowest upper bound in $S$, may have one inside $A$. So, from $P_2$, Lawson and Scott \cite[Proposition 3.32]{LS14} constructed a new inverse semigroup $C_2$, called the {\em Cuntz inverse monoid}, by adding to $P_2$ all possible joins of {\em compatible} elements ($s, t$ are compatible if $s^*t, st^*\in E(S)$). 

The Cuntz inverse monoid is an example of a {\em Boolean inverse monoid}, and the goal of this paper is to define universal C*-algebras for such monoids and study them. A Boolean inverse monoid is an inverse semigroup which contains joins of all finite compatible sets of elements and whose idempotent set is a Boolean algebra. To properly represent a Boolean inverse monoid $S$, one reasons, one should insist that the join of two compatible $s, t\in S$ be sent to the join of the images of $s$ and $t$. We prove in Proposition \ref{BIMRtight} that such a representation is necessarily a tight representation, and so we obtain that the universal C*-algebra of a Boolean inverse monoid (which we denote $\CB(S)$) is exactly its tight C*-algebra, Theorem \ref{BIMtight}. This is the starting point of our study, as the universal tight C*-algebra can be realized as the C*-algebra of an ample groupoid. 

The main inspiration of this paper is \cite{KLLR15} which defines and studies {\em invariant means} on Boolean inverse monoids. An invariant mean is a function $\mu: E(S)\to [0,\infty)$ such that $\mu(e\vee f) = \mu(e)+\mu(f)$ when $e$ and $f$ are orthogonal, and such that $\mu(ss^*) = \mu(s^*s)$ for all $s\in S$. If one thinks of the idempotents as clopen sets in the Stone space of the Boolean algebra $E(S)$, such a function has the flavour of an invariant measure or a trace. We make this precise in Section 4: as long as $S$ satisfies a condition which guarantees that the induced groupoid is Hausdorff (which we call condition \eqref{condH}), every invariant mean on $S$ gives rise to a trace on $\CB(S)$ (Proposition \ref{taumu}) and every trace on $\CB(S)$ gives rise to an invariant mean on $S$ (Proposition \ref{mutau}). This becomes a one-to-one correspondence if we assume that the associated groupoid $\gt(S)$ is principal and amenable (Theorem \ref{maintheo}). We also prove that, whether $\gt(S)$ is principal and amenable or not, there is an affine isomorphism between the space of invariant means on $S$ and the space of $\gt(S)$-invariant measures on its unit space (Proposition \ref{etanu}).

In the final section, we apply our results to examples of interest. We study the {\em AF inverse monoids} in detail -- these are Boolean inverse monoids arising from Bratteli diagrams in much the same way as AF C*-algebras. As it should be, given a Bratteli diagram, the C*-algebra of its Boolean inverse monoid is isomorphic to the AF algebra it determines (Theorem \ref{AFBIM}). From this we can conclude, using the results of Section 4 and the seminal result of Blackadar \cite{Bl80}, that any Choquet simplex arises as the space of invariant means for some Boolean inverse monoid. We go on to consider two examples where there is typically only one invariant mean, those being self-similar groups and aperiodic tilings.

\section{Preliminaries and notation}
We will use the following general notation. If $X$ is a set and $U\subset X$, let Id$_U$ denote the map from $U$ to $U$ which fixes every point, and let $1_{U}$ denote the characteristic function on $U$, i.e. $1_U: X\to \CC$ defined by $1_U(x) = 1$ if $x\in U$ and $1_U(x) = 0$ if $x\notin U$. If $F$ is a finite subset of $X$, we write $F\fs X$.
\subsection{Inverse semigroups}\label{ISGsubsection}

An {\em inverse semigroup} is a semigroup $S$ such that for all $s\in S$, there is a unique element $s^*\in S$ such that
\[
ss^*s = s, \hspace{1cm}s^*ss^* = s^*.
\]
The element $s^*$ is called the {\em inverse} of $s$. All inverse semigroups in this paper are assumed to be discrete and countable. For $s,t\in S$, one has $(s^*)^* = s$ and $(st)^* = t^*s^*$. Although not implied by the definition, we will always assume that inverse semigroups have a 0 element, that is, an element such that
\[
0s = s0 = 0\text{ for all }s\in S.
\]
An inverse semigroup with identity is called an {\em inverse monoid}. Even though we call $s^*$ the inverse of $s$, we need not have $ss^* = 1$, although it is always true that $(ss^*)^2 = ss^*ss^* = ss^*$, i.e. $ss^*$ (and $s^*s$ for that matter) is an {\em idempotent}. We denote the set of all idempotents in $S$ by
\[
E(S) = \{e\in S\mid e^2 = e\}.
\]
It is a nontrivial fact that if $S$ is an inverse semigroup, then $E(S)$ is closed under multiplication and commutative. It is also clear that if $e\in E(S)$, then $e^* = e$.

Let $X$ be a set, and let
\[
\I(X) = \{f:U\to V\mid U, V\subset X, f\text{ bijective}\}.
\]
Then $\I(X)$ is an inverse monoid with the operation of composition on the largest possible domain, and inverse given by function inverse; this is called the {\em symmetric inverse monoid on $X$}.  Every idempotent in $\I(X)$ is given by Id$_U$ for some $U\subset X$. The function Id$_X$ is the identity for $\I(X)$, and the empty function is the 0 element for $\I(X)$. The fundamental {\em Wagner-Preston theorem} states that every inverse semigroup is embeddable in $\I(X)$ for some set $X$ -- one can think of this as analogous to the Cayley theorem for groups.

Every inverse semigroup carries a natural order structure: for $s,t\in S$ we say $s\leqslant t$ if and only if $ts^*s = s$, which is also equivalent to $ss^*t = s$. For elements $e, f\in E(S)$, we have $e\leqslant f$ if and only if $ef = e$. As usual, for $s,t\in S$, the {\em join} (or least upper bound) of $s$ and $t$ will be denoted $s\vee t$ (if it exists), and the {\em meet} (or greatest lower bound) of $s$ and $t$ will be denoted $s\wedge t$ (if it exists). For $A\subset S$, we let $A^{\uparrow} = \{t\in S\mid s\leqslant t \text{ for some }s\in A\}$ and $A^{\downarrow} = \{t\in S\mid t\leqslant s\text{ for some }s\in A\}$.

If $s,t\in S$, then we say $s$ and $t$ are {\em compatible} if $s^*t, st^*\in E(S)$, and a set $F\subset S$ is called compatible if all pairs of elements of $F$ are compatible. 

\begin{defn}
An inverse semigroup $S$ is called {\em distributive} if whenever we have a compatible set $F\fs S$, then $\bigvee_{s\in F}s$ exists in $S$, and for all $t\in S$ we have
\[
t\left(\bigvee_{s\in F}s\right) = \bigvee_{s\in F}ts\hspace{1cm}\text{and}\hspace{1cm} \left(\bigvee_{s\in F}s\right)t = \bigvee_{s\in F}st.
\]
\end{defn}

In the natural partial order, the idempotents form a {\em meet semilattice}, which is to say that any two elements $e, f\in E(S)$ have a meet, namely $ef$. If $C\subset X\subset  E(S)$, we say that $C$ is a {\em cover} of $X$ if for all $x\in X$ there exists $c\in C$ such that $cx \neq 0$.

In a distributive inverse semigroup each pair of idempotents has a join in addition to the meet mentioned above, but in general $E(S)$ will not have relative complements and so in general will not be a Boolean algebra. The case where $E(S)$ is a Boolean algebra is the subject of the present paper.

\begin{defn}
A {\em Boolean inverse monoid} is a distributive inverse monoid $S$ with the property that $E(S)$ is a Boolean algebra, that is, for every $e\in E(S)$ there exists $e^\perp\in E(S)$ such that $ee^\perp = 0$, $e\vee e^\perp = 1$, and the operations $\vee, \wedge, \perp$ satisfy the laws of a Boolean algebra \cite[Chapter 2]{GH09}.
\end{defn} 

\begin{ex}
Perhaps the best way to think about the order structure and related concepts above is by describing them on $\I(X)$, which turns out to be a Boolean inverse monoid. Firstly, for $g,h\in \I(X)$, $g\leqslant h$ if and only if $h$ extends $g$ as a function. In $\I(X)$, two functions $f$ and $g$ are compatible if they agree on the intersection of their domains and their inverses agree on the intersection of their ranges. In such a situation, one can form the join $f\vee g$ which is the union of the two functions; this will again be an element of $\I(X)$. Composing $h\in \I(X)$ with $f\vee g$ will be the same as $hf\vee hg$. Finally, $E(\I(X)) = \{\text{Id}_U\mid U\subset X\}$ is a Boolean algebra (isomorphic to the Boolean algebra of all subsets of $X$) with Id$_U^\perp =$ Id$_{U^c}$. 

\end{ex}
\subsection{\'Etale groupoids}
A {\em groupoid} is a small category where every arrow is invertible. If $\g$ is a groupoid, the set of elements $\gamma\gamma^{-1}$ is denoted $\g^{(0)}$ and is called the set of {\em units} of $\g$. The maps $r: \g \to \g^{(0)}$ and $d:\g \to \g^{(0)}$ defined by $r(\gamma) = \gamma\gamma^{-1}$ and $d(\gamma) = \gamma^{-1}\gamma$ are called the {\em range} and {\em source} maps, respectively.

The set $\g^{(2)} = \{(\gamma, \eta)\in \g^2\mid r(\eta) = d(\gamma)\}$ is called the set of {\em composable pairs}. A {\em topological groupoid} is a groupoid $\g$ which is a topological space and for which the inverse map from $\g$ to $\g$ and the product from $\g^{(2)}$ to $\g$ are both continuous (where in the latter, the topology on $\g^{(2)}$ is the product topology inherited from $\g^2$). 

We say that a topological groupoid $\g$ is {\em \'etale} if it is locally compact, second countable, $\g^{(0)}$ is Hausdorff, and the maps $r$ and $d$ are both local homeomorphisms. Note that an \'etale groupoid need not be Hausdorff. If $\g$ is \'etale, then $\g^{(0)}$ is open, and $\g$ is Hausdorff if and only if $\g^{(0)}$ is closed (see for example \cite[Proposition 3.10]{EP16}).

For $x\in \g^{(0)}$, let $\g(x) = \{\gamma \in \g\mid r(\gamma) = d(\gamma) = x\}$ -- this is a group, and is called the {\em isotropy group at $x$}. A groupoid $\g$ is said to be {\em principal} if all the isotropy groups are trivial, and a topological groupoid is said to be {\em essentially principal} if the points with trivial isotropy groups are dense in $\g^{(0)}$. A topological groupoid is said to be {\em minimal} if for all $x\in \g^{(0)}$, the set $O_\g(x) = r(d^{-1}(x))$ is dense in $\g^{(0)}$ (the set $O_\g(x)$ is called the {\em orbit} of $x$).

If $\g$ is an \'etale groupoid, an open set $U\subset \g$ is called a {\em bisection} if $\left.r\right|_U$ and $\left.d\right|_U$ are both injective (and hence homeomorphisms). The set of all bisections is denoted $\g^{op}$ and is a distributive inverse semigroup when given the operations of setwise product and inverse. We say that an \'etale groupoid $\g$ is {\em ample} if the set of {\em compact} bisections forms a basis for the topology on $\g$. The set of compact bisections is called the {\em ample semigroup of} $\g$, is denoted $\g^a$, and is also a distributive inverse subsemigroup of $\g^{op}$ \cite[Lemma 3.14]{LL13}. Since $\g$ is second countable, $\g^a$ must be countable \cite[Corollary 4.3]{Ex10}. If $\g^{(0)}$ is compact, then the idempotent set of $\g^a$ is the set of all clopen sets in $\g^{(0)}$, and so $\g^a$ is a Boolean inverse monoid (see also \cite[Proposition 3.7]{Ste10} which shows that when $\g$ is Hausdorff and $\g^{(0)}$ is only locally compact, $\g^a$ is a Boolean inverse {\em semigroup}, i.e. a distributive inverse semigroup whose idempotent semilattice is a generalized Boolean algebra).

To an \'etale groupoid $\g$ one can associate C*-algebras through the theory developed by Renault \cite{R80}. Let $C_c(\g)$ denote the linear space of continuous compactly supported functions on $\g$. Then $C_c(\g)$ becomes a $*$-algebra with product and involution given by
\[
fg(\gamma) = \sum_{\gamma_1\gamma_2 = \gamma} f(\gamma_1)g(\gamma_2), \hspace{1cm} f^*(\gamma) = \overline{f(\gamma^{-1})}.
\]
From this one can produce two C*-algebras $C^*(\g)$ and $C_{\text{red}}^*(\g)$ (called the {\em C*-algebra of $\g$} and the {\em reduced C*-algebra of $\g$}, respectively) by completing $C_c(\g)$ in certain norms, see \cite[Definitions 1.12 and 2.8]{R80}. There is always a surjective $*$-homomorphism $\Lambda: C^*(\g)\to C_{\text{red}}^*(\g)$, and if $\Lambda$ is an isomorphism we say that $\g$ satisfies {\em weak containment}. If $\g$ is {\em amenable} \cite{AR00}, then $\g$ satisfies weak containment.  There is an example of a case where $\Lambda$ is an isomorphism for a nonamenable groupoid \cite{Wi15}, but under some conditions on $\g$ one has that weak containment and amenability are equivalent, see \cite[Theorem B]{AD16}.

Recall that if $B\subset A$ are both C*-algebras, then a surjective linear map $E:A \to B$ is called a {\em conditional expectation} if $E$ is contractive, $E\circ E = E$, and $E(bac) = bE(a)c$ for all $b,c\in B$ and $a\in A$. Let $\g$ be a Hausdorff \'etale groupoid with compact unit space, and consider the map $E: C_c(\g)\to C(\g^{(0)})$ defined by 
\begin{equation}\label{conditionalexpectation}
E(f) = \left.f\right|_{\g^{(0)} }.
\end{equation} 
Then this map extends to a conditional expectation on both $C^*(\g)$ and $\Cr(\g)$, both denoted $E$. On $\Cr(\g)$, $E$ is {\em faithful} in the sense that if $E(a^*a) = 0$, then $a=0$.

Let $\g$ be an ample \'etale groupoid. Both C*-algebras contain $C_c(\g)$, and hence if $U$ is a compact bisection, $1_U$ is an element of both C*-algebras. Hence we have a map $\pi:\g^a\to C^*(\g)$ given by $\pi(U) = 1_U$. This map satisfies $\pi(UV) = \pi(U)\pi(V)$, $\pi(U^{-1})$, and $\pi(0) = 0$, in other words, $\pi$ is a {\em representation} of the inverse semigroup $\g^a$ \cite{Ex10}.

\subsection{The tight groupoid of an inverse semigroup}

Let $S$ be an inverse semigroup. A {\em filter} in $E(S)$ is a nonempty subset $\xi\subset E(S)$ such that 
\begin{enumerate}
\item$0\notin \xi$, 
\item$e, f\in \xi$ implies that $ef\in \xi$, and 
\item $e\in \xi, e\leqslant f$ implies $f\in \xi$. 
\end{enumerate}
The set of filters is denoted $\Ef(S)$, and can be viewed as a subspace of $\{0,1\}^{E(S)}$. For $X,Y\fs E(S)$, let
\[
U(X,Y) = \{\xi\in \Ef(S)\mid X\subset \xi, Y\cap \xi = \emptyset\}.
\]
sets of this form are clopen and generate the topology on $\Ef(S)$ as $X$ and $Y$ vary over all the finite subsets of $E(S)$. With this topology, $\Ef(S)$ is called the {\em spectrum} of $E(S)$.

A filter is called an {\em ultrafilter} if it is not properly contained in any other filter. The set of all ultrafilters is denoted $\Eu(S)$. As a subspace of $\Ef(S)$, $\Eu(S)$ may not be closed. Let $\Et(S)$ denote the closure of $\Eu(S)$ in $\Ef(S)$ -- this is called the {\em tight spectrum} of $E(S)$. Of course, when $E(S)$ is a Boolean algebra, $\Et(S) = \Eu(S)$ by Stone duality \cite[Chapter 34]{GH09}.

An {\em action} of an inverse semigroup $S$ on a locally compact space $X$ is a semigroup homomorphism $\alpha: S\to \I(X)$ such that 
\begin{enumerate}
\item $\alpha_s$ is continuous for all $s\in S$,
\item the domain of $\alpha_s$ is open for each $s\in S$, and
\item the union of the domains of the $\alpha_s$ is equal to $X$.
\end{enumerate}
If $\alpha$ is an action of $S$ on $X$, we write $\alpha: S\curvearrowright X$. The above implies that $\alpha_{s^*} = \alpha_s^{-1}$, and so each $\alpha_s$ is a homeomorphism. For each $e\in E(S)$, the map $\alpha_e$ is the identity on some open subset $D_e^\alpha$, and one easily sees that the domain of $\alpha_s$ is $D_{s^*s}^\alpha$ and the range of $\alpha_s$ is $D_{ss^*}^\alpha$, that is
\[
\alpha_s: D_{s^*s}^\alpha\to D_{ss^*}^\alpha.
\]

There is a natural action $\theta$ of $S$ on $\Et(S)$; this is referred to in \cite{EP16} as the {\em standard action} of $S$. For $e\in E(S)$, let $D_e^\theta = \{\xi\in \Et(S)\mid e\in \xi\} = U(\{e\}, \emptyset)\cap \Et(S)$. For each $s\in S$ and $\xi \in D_{s^*s}^\theta$, define $\theta_s(\xi) = \{ses^*\mid e\in \xi\}^{\uparrow}$ -- this is a well-defined homeomorphism from $D_{s^*s}^\theta$ to $D_{ss^*}^\theta$, for the details, see \cite{Ex08}.

One can associate a groupoid to an action $\alpha:S \curvearrowright X$. Let $S\times_\alpha X = \{(s,x)\in S\times X\mid x\in D_{s^*s}^\alpha\}$, and put an equivalence relation $\sim$ on this set by saying that $(s,x)\sim (t, y)$ if and only if $x = y$ and there exists some $e\in E(S)$ such that $se = te$ and $x\in D_e^\alpha$. The set of equivalence classes is denoted
\[
\g(\alpha) = \{ [s,x]\mid s\in S, x\in X\}
\]
and becomes a groupoid when given the operations
\[
 d([s,x]) = x,\hspace{0.5cm} r([s,x]) = \alpha_s(x),
\]
\[
[s,x]^{-1} = [s^*, \alpha_s(x)],\hspace{0.5cm}[t,\alpha_s(x)][s,x] = [ts,x].
\]
This is called the {\em groupoid of germs} of $\alpha$. Note that above we are making the identification of the unit space with $X$, because $[e, x] = [f, x]$ for any $e, f\in E(S)$ with $x\in D_e^\alpha, D_f^\alpha$. For $s\in S$ and open set $U\subset D^\alpha_{s^*s}$ we let
\[
\Theta(s, U) = \{[s,x]\mid x\in U\}
\]
and endow $\g(\alpha)$ with the topology generated by such sets. With this topology $\g(\alpha)$ is an \'etale groupoid, sets of the above type are bisections, and if $X$ is totally disconnected $\g(\alpha)$ is ample.

Let $\theta:S\curvearrowright \Et(S)$ be the standard action, and define
\[
\gt(S) = \g(\theta).
\] 
This is called the {\em tight groupoid} of $S$. This was defined first in \cite{Ex08} and studied extensively in \cite{EP16}. 

Let $\g$ be an ample \'etale groupoid, and consider the Boolean inverse monoid $\g^a$. By work of Exel \cite{Ex10} if one uses the above procedure to produce a groupoid from $\g^a$, one ends up with exactly $\g$. In symbols,
\begin{equation}\label{NCstone}
\gt(\g^a) \cong \g\text{ for any ample \'etale groupoid }\g.
\end{equation}
We note this result was also obtained in \cite[Theorem 6.11]{L08} in the case where $\Et(S) = \Eu(S)$. In particular,
\[
\gt(\gt(S)^a) \cong \gt(S) \text{ for all inverse semigroups }S.
\]
This result can be made categorical \cite[Theorem 3.26]{LL13}, and has been generalized to cases where the space of units is not even Hausdorff. This duality between Boolean inverse semigroups and ample \'etale groupoids falls under the broader program of noncommutative Stone duality, see \cite{LL13} for more details. 

\section{C*-algebras of Boolean inverse monoids}

In this section we describe the tight C*-algebra of a general inverse monoid, define the C*-algebra of a Boolean inverse monoid, and show that these two notions coincide for Boolean inverse monoids. 

If $S$ is an inverse monoid, then a {\em representation} of $S$ in a unital C*-algebra $A$ is a map $\pi:S\to A$ such that $\pi(0) = 0$, $\pi(s^*) = \pi(s)^*$, and $\pi(st) = \pi(s)\pi(t)$ for all $s,t\in S$. If $\pi$ is a representation, then $C^*(\pi(E(S)))$ is a commutative C*-algebra. Let 
\[
\mathscr{B}_\pi = \{e\in C^*(\pi(E(S)))\mid e^2 = e = e^*\}
\]
Then this set is a Boolean algebra with operations
\[
e\wedge f = ef, \hspace{1cm}e\vee f = e+f -ef, \hspace{1cm}e^\perp = 1-e.
\]

We will be interested in a subclass of representations of $S$. Take $X,Y\fs E(S)$, and define
\[
E(S)^{X,Y} = \{e\in E(S)\mid e\leqslant x\text{ for all }x\in X, ey = 0 \text{ for all }y\in Y\}
\] 
We say that a representation $\pi: S\to A$ with $A$ unital is {\em tight} if for all $X,Y,Z\fs E(S)$ where $Z$ is a cover of $E(S)^{X,Y}$, we have the equation
\begin{equation}\label{tightequation}
\bigvee_{z\in Z} \pi(z) = \prod_{x\in X}\pi(x)\prod_{y\in Y}(1-\pi(y)).
\end{equation}
The {\em tight C*-algebra} of $S$, denoted $\Ct(S)$, is then the universal unital C*-algebra generated by one element for each element of $S$ subject to the relations that guarantee that the standard map from $S$ to $\Ct(S)$ is tight. The above was all defined in \cite{Ex08} and the interested reader is directed there for the details. It is a fact that $\Ct(S)\cong C^*(\gt(S))$ where the latter is the full groupoid C*-algebra (see e.g. \cite[Theorem 2.4]{Ex10}).

If $S$ has the additional structure of being a Boolean inverse monoid, then we might wonder what extra properties $\pi$ should have, in particular, what is the notion of a ``join'' of two partial isometries in a C*-algebra?

Let $A$ be a C*-algebra, and suppose that $S$ is a Boolean inverse monoid of partial isometries in $A$.  If we have $s, t\in S$ such that $s^*t, st^*\in E(S)$, then
\[
tt^*s = tt^*ss^*s = ss^*tt^*s = s(s^*t)(s^*t)^* = ss^*t
\] 
and if we let $a_{s,t} := s+t - ss^*t = s+t - tt^*s$, this is a partial isometry with range $a_{ss^*,tt^*}$ and support $a_{s^*s,t^*t}$. A short calculation shows that $a_{s,t}$ is the least upper bound for $s$ and $t$ in the natural partial order, and so $a_{s,t} = s\vee t$. It is also straightforward that $r(s\vee t) = rs\vee rt$ for all $r, s, t\in S$. This leads us to the following definitions.

\begin{defn}\label{IMdef}Let $S$ be a Boolean inverse monoid. A {\em Boolean inverse monoid representation} of $S$ in a unital C*-algebra $A$ is a map $\pi: S\to A$ such that 
\begin{enumerate}
\item $\pi(0) = 0$,
\item $\pi(st) = \pi(s)\pi(t)$ for all $s, t\in S$, 
\item $\pi(s^*) = \pi(s)^*$ for all $s\in S$, and
\item $\pi(s\vee t) = \pi(s) + \pi(t) - \pi(ss^*t)$ for all compatible $s, t, \in S$.
\end{enumerate}
\end{defn}

\begin{defn}
Let $S$ be a Boolean inverse monoid. Then the {\em universal C*-algebra of $S$}, denoted $\CB(S)$, is defined to be the universal unital C*-algebra generated by one element for each element of $S$ subject to the relations which say that the standard map of $S$ into $\CB(S)$ is a Boolean inverse monoid representation. The map $\pi_u$ which takes an element $s$ to its corresponding element in $\CB(S)$ will be called the {\em universal Boolean inverse monoid representation of $S$}, and we will sometimes use the notation $\delta_s:=\pi_u(s)$.
\end{defn}
The theory of tight representations was originally developed to deal with representing inverse semigroups (in which joins may not exist) inside C*-algebras, because in a C*-algebra two commuting projections always have a join. It should come as no surprise then that once we are dealing with an inverse semigroup where we \underline{can} take joins, the representations which respect joins end up being exactly the tight representations, see \cite[Corollary 2.3]{DM14}. This is what we prove in the next proposition.
\begin{prop}\label{BIMRtight}
Let $S$ be a Boolean inverse monoid. Then a map $\pi: S \to A$ is a Boolean inverse monoid representation of $S$ if and only if $\pi$ is a tight representation.
\end{prop}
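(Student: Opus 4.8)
The plan is to prove the two implications separately, and in each direction the crux is translating between the "cover relation" \eqref{tightequation} and the single join-equation in Definition \ref{IMdef}(4). For the easier direction, suppose $\pi$ is tight. Properties (1)--(3) of a Boolean inverse monoid representation are immediate, since every representation satisfying the tight relations is in particular a representation. For (4), given compatible $s,t\in S$, I would first reduce to the case of idempotents: writing $e=s^*s$ and $f=t^*t$, one has $s\vee t$ defined with $(s\vee t)^*(s\vee t) = e\vee f$, and multiplying the desired identity on the right by $\pi(s)^*\pi(s)$-type terms, the statement $\pi(s\vee t)=\pi(s)+\pi(t)-\pi(ss^*t)$ follows once we know $\pi$ is additive on orthogonal idempotents, i.e.\ $\pi(e\vee f)=\pi(e)+\pi(f)$ whenever $ef=0$, together with the general fact $\pi(s\vee t)=\pi(s)\vee\pi(t)$ in $\mathscr B_\pi$-arithmetic that was recorded in the discussion of $a_{s,t}$. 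To get additivity on orthogonal idempotents from tightness, apply \eqref{tightequation} with $X=\emptyset$, $Y=\emptyset$ is too coarse; instead take $Z=\{e,f\}$, which is a cover of $E(S)^{\{e\vee f\},\emptyset} = (e\vee f)^\downarrow$ since any nonzero $g\leqslant e\vee f$ meets $e$ or $f$ (because $g = g(e\vee f) = ge\vee gf$). Then \eqref{tightequation} reads $\pi(e)\vee\pi(f)=\pi(e\vee f)$, and since $ef=0$ gives $\pi(e)\pi(f)=0$, the join in $\mathscr B_\pi$ is just the sum, so $\pi(e\vee f)=\pi(e)+\pi(f)$.

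For the converse, suppose $\pi$ is a Boolean inverse monoid representation; I must verify \eqref{tightequation} for arbitrary $X,Y,Z\fs E(S)$ with $Z$ a cover of $E(S)^{X,Y}$. Set $q = \prod_{x\in X}\pi(x)\prod_{y\in Y}(1-\pi(y))$, which is a projection in $\mathscr B_\pi$, and let $e_0 = \bigwedge X \in E(S)$ (with $e_0 = 1$ if $X=\emptyset$). The first step is to show that without loss of generality the $Y$-part can be absorbed: because $E(S)$ is a Boolean algebra, $E(S)^{X,Y}$ equals $(e_0 \wedge \bigwedge_{y\in Y} y^\perp)^\downarrow\setminus\{0\}$, so replacing $e_0$ by $e := e_0\wedge\bigwedge_{y}y^\perp$ and noting $\pi(e) = \prod_{x}\pi(x)\prod_y(1-\pi(y)) = q$ (using multiplicativity and that $\pi(y^\perp) = 1-\pi(y)$, which follows from (4) applied to the orthogonal pair $y, y^\perp$ with join $1$), it suffices to prove $\bigvee_{z\in Z}\pi(z) = \pi(e)$ whenever $Z$ is a cover of $e^\downarrow\setminus\{0\}$. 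Now I would replace each $z\in Z$ by $ze$: since $z$ covers something below $e$ forces nothing, but $\{ze : z\in Z\}$ is still a cover and each $ze\leqslant e$, and $\pi(ze) = \pi(z)\pi(e)\leqslant\pi(e)$, so $\bigvee_z\pi(ze)\leqslant\pi(e)$ always. The real content is the reverse inequality: I claim that a finite cover $Z$ of $e^\downarrow\setminus\{0\}$ by idempotents below $e$ can be refined to a finite \emph{partition} of $e$, i.e.\ pairwise orthogonal $f_1,\dots,f_n$ with $\bigvee f_i = e$ and each $f_i \leqslant$ some element of $Z$. This is a purely Boolean-algebra fact (in the finite subalgebra generated by $Z\cup\{e\}$, the atoms below $e$ each lie below some $z\in Z$ by the covering property, and one groups them); granting it, additivity of $\pi$ on orthogonal joins gives $\pi(e) = \sum\pi(f_i) = \bigvee_i\pi(f_i)\leqslant\bigvee_{z}\pi(z)\wedge\pi(e) = \bigvee_z\pi(ze)$, completing the equality.

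The main obstacle I anticipate is exactly this refinement lemma and the bookkeeping around it: one must be careful that "cover" in the sense defined ($cx\neq 0$ for all $x\in E(S)^{X,Y}$) interacts correctly with passing to the finite Boolean subalgebra and with the idempotents $ze$ which may be $0$ or may fail to individually sit below a single $z$ until one refines. A clean way to handle it is to work in the finite Boolean algebra $B_0 = \langle X\cup Y\cup Z\rangle\subset E(S)$, let $A_e$ be its (finitely many) atoms below $e$, observe each atom $a\in A_e$ satisfies $a\leqslant e \in E(S)^{X,Y}$ hence $za\neq 0$ for some $z\in Z$, hence $a = za \leqslant z$; then $e = \bigvee_{a\in A_e} a$ is an orthogonal join with each $a$ below some $z\in Z$, and $\pi$'s additivity on orthogonal joins (iterated from axiom (4), which requires checking orthogonal joins of more than two elements reduce to the pairwise case — straightforward by induction using distributivity) yields the result. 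Finally I would remark that axiom (4) in the form stated only directly covers \emph{compatible} pairs, so throughout I am using the sub-case of orthogonal pairs and the identity $s\vee t = (s\vee t)$ computed via $a_{s,t}$; since orthogonal idempotents are automatically compatible this is legitimate. This mirrors \cite[Corollary 2.3]{DM14} and \cite[Corollary 2.3]{Ex08}, which I would cite for the analogous statements.
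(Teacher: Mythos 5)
Your proposal is correct, but only one of its two halves follows the paper's route. For the direction ``tight $\Rightarrow$ Boolean inverse monoid representation'' you do essentially what the paper does: extract $\pi(e\vee f)=\pi(e)\vee\pi(f)$ from the cover $\{e,f\}$ of $E(S)^{\{e\vee f\},\emptyset}$ and then reduce compatible $s,t$ to the idempotent case. One caution here: as phrased, your reduction appeals to ``the general fact $\pi(s\vee t)=\pi(s)\vee\pi(t)$,'' which is circular, since that identity is a restatement of what is being proved. The reduction does go through, but as a computation: write $\pi(s\vee t)=\pi(s\vee t)\,\pi\bigl((s\vee t)^*(s\vee t)\bigr)=\pi(s\vee t)\bigl(\pi(s^*s)+\pi(t^*t)-\pi(s^*st^*t)\bigr)$, expand using multiplicativity and distributivity of $S$, and simplify with $st^*s\leqslant s$, $ts^*t\leqslant t$ and $ts^*s=st^*t=ss^*t$; this is exactly the paper's calculation, so you should spell it out rather than cite a ``general fact.'' For the converse, the paper simply notes that $\pi|_{E(S)}$ is a Boolean algebra homomorphism into $\mathscr{B}_\pi$ and invokes \cite[Proposition 11.9]{Ex08}, whereas you prove tightness directly: absorb $Y$ into a single idempotent $e$, pass to the finite Boolean subalgebra generated by $X\cup Y\cup Z$, and observe that every atom below $e$ lies in $E(S)^{X,Y}$, hence meets --- and therefore lies below --- some $z\in Z$, so $e$ is an orthogonal join of idempotents dominated by elements of $Z$. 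That atoms argument is sound (it amounts to a proof of Exel's Proposition 11.9 in the Boolean setting) and buys a self-contained proposition at the cost of some bookkeeping, while the paper's citation is shorter but external. Two small things to tidy: additivity of $\pi$ over orthogonal joins of more than two idempotents needs the one-line induction you flag, and your identity $\pi(y^\perp)=1-\pi(y)$ tacitly uses $\pi(1)=1$; this is harmless under the paper's conventions, and can be avoided entirely by the relative version $\pi(e_0)(1-\pi(y))=\pi(e_0)-\pi(e_0y)=\pi(e_0y^\perp)$.
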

\begin{proof}
Suppose that $\pi$ is a Boolean inverse monoid representation of $S$. Then when restricted to $E(S)$, $\pi$ is a Boolean algebra homomorphism into $\mathscr{B}_\pi$, and so by \cite[Proposition 11.9]{Ex08}, $\pi$ is a tight representation.

On the other hand, suppose that $\pi$ is a tight representation, and first suppose that $e, f\in E(S)$. Then the set $\{e, f\}$ is a cover for $E(S)^{\{e\vee f\},\emptyset}$, so
\[
\pi(e)\vee \pi(f) = \pi(e\vee f).
\]
Now let $s, t\in S$ be compatible, so that $s^*t = t^*s$ and $st^*= ts^*$ are both idempotents, and we have
\[
s^*st^*t = s^*ts^*t = s^*t.
\]
Since $(s\vee t)^*(s\vee t) = s^*s\vee t^*t$, we have
\begin{eqnarray*}
\pi(s\vee t) &=&\pi(s\vee t)\pi(s^*s\vee t^*t)\\ 
             &=&\pi(s\vee t)(\pi(s^*s)+ \pi(t*t) - \pi(s^*st^*t)\\
             &=&\pi(ss^*s\vee ts^*s) + \pi(st^*t\vee tt^*t) - \pi(ss^*st^*t\vee tt^*ts^*s))\\
             &=&\pi(s\vee st^*s) + \pi(ts^*t\vee t) - \pi(st^*t\vee ts^*s)\\
             &=&\pi(s) + \pi(t) - \pi(ss^*t)
\end{eqnarray*}
where the last line follows from the facts that $st^*s\leqslant s$, $ts^*t\leqslant t$ and $ts^*s = st^*t = ss^*t = tt^*s$.
\end{proof}
We have the following consequence of the proof of the above proposition.
\begin{cor}
Let $S$ be a Boolean inverse monoid. Then a map $\pi:S \to A$ is a Boolean inverse monoid representation of $S$ if and only if it is a representation and for all $e, f\in E(S)$ we have $\pi(e\vee f) = \pi(e) + \pi(f) - \pi(ef)$. 
\end{cor}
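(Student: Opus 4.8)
The forward implication is immediate from the definitions: a Boolean inverse monoid representation is in particular a representation, and applying condition~(4) of Definition~\ref{IMdef} to a pair $e,f\in E(S)$ gives $\pi(e\vee f)=\pi(e)+\pi(f)-\pi(ee^*f)$, where $ee^*f=ef$ since $e^*=e$ and $e^2=e$.

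For the converse the plan is to reuse, essentially verbatim, the computation in the proof of Proposition~\ref{BIMRtight}. Suppose $\pi$ is a representation with $\pi(e\vee f)=\pi(e)+\pi(f)-\pi(ef)$ for all $e,f\in E(S)$, and let $s,t\in S$ be compatible. Starting from $\pi(s\vee t)=\pi(s\vee t)\,\pi\!\big((s\vee t)^*(s\vee t)\big)=\pi(s\vee t)\,\pi(s^*s\vee t^*t)$, one applies the hypothesis to the idempotent join $s^*s\vee t^*t$, distributes $\pi(s\vee t)$ across the resulting sum of three terms using multiplicativity of $\pi$ together with distributivity of $S$, and then simplifies inside $S$ exactly as in that proof to land on $\pi(s)+\pi(t)-\pi(ss^*t)$. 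The point to check --- and the only thing worth checking --- is that every line of that chain is justified by (i) an identity valid in the Boolean inverse monoid $S$, (ii) multiplicativity or $*$-preservation of $\pi$, or (iii) the join formula on idempotents, so that the general case of condition~(4) is never invoked and the argument is not circular. Granting this, $\pi$ satisfies condition~(4) for all compatible pairs and is therefore a Boolean inverse monoid representation.

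An alternative, slightly more conceptual, route is to show directly that $\pi|_{E(S)}$ is a homomorphism of Boolean algebras into $\mathscr{B}_\pi$: it preserves meets since $\pi(ef)=\pi(e)\pi(f)$, preserves joins by hypothesis, sends $0$ to $0$, and sends $1$ to the top element of $\mathscr{B}_\pi$ because $\pi(1)$ acts as a unit on $C^*(\pi(E(S)))$; complements are then handled by feeding $e\vee e^{\perp}=1$ and $ee^{\perp}=0$ through the join formula and through multiplicativity. By \cite[Proposition~11.9]{Ex08} this makes $\pi$ tight, and Proposition~\ref{BIMRtight} then identifies it as a Boolean inverse monoid representation. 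Either way there is no substantive obstacle: the work is entirely bookkeeping, and the only thing to be careful about is not smuggling in the conclusion while confirming that the idempotent-level hypothesis really suffices (and, in the second route, that $\pi(1)$ is genuinely the top of $\mathscr{B}_\pi$).
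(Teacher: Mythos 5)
Your proof is correct and matches the paper's: the paper gives no separate argument, stating only that the corollary is ``a consequence of the proof'' of Proposition~\ref{BIMRtight}, which is exactly your observation that the displayed chain of equalities there uses the join hypothesis only on idempotents (plus multiplicativity and identities in $S$). Your alternative route via \cite[Proposition~11.9]{Ex08} is likewise just the first half of that same proposition's proof, so nothing genuinely different is happening.
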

We now have the following.
\begin{theo}\label{BIMtight}
Let $S$ be a Boolean inverse monoid. Then 
\[
\CB(S) \cong \Ct(S) \cong C^*(\gt(S)).
\]
\end{theo}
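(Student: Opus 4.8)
{\em Proof plan.}
The statement to establish is the chain of isomorphisms $\CB(S) \cong \Ct(S) \cong C^*(\gt(S))$. The second isomorphism is not really ours to prove: the excerpt has already recorded that $\Ct(S) \cong C^*(\gt(S))$ is a theorem of Exel (cited as \cite[Theorem 2.4]{Ex10}), so the entire burden falls on showing $\CB(S) \cong \Ct(S)$. The plan is therefore to produce a pair of mutually inverse $*$-homomorphisms between these two universal C*-algebras by exploiting the fact that, by Proposition \ref{BIMRtight}, a map $\pi\colon S\to A$ into a unital C*-algebra $A$ is a Boolean inverse monoid representation if and only if it is a tight representation. In other words, the two universal objects are built from the \emph{same} class of representations, so a universal-property argument gives the isomorphism essentially for free.

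Concretely, I would argue as follows. Let $\pi_u\colon S\to \CB(S)$ be the universal Boolean inverse monoid representation and let $\rho_u\colon S\to \Ct(S)$ be the universal tight representation (i.e.\ the standard map). By Proposition \ref{BIMRtight}, $\rho_u$ is a Boolean inverse monoid representation, so by the universal property of $\CB(S)$ there is a unique unital $*$-homomorphism $\Phi\colon \CB(S)\to \Ct(S)$ with $\Phi\circ\pi_u = \rho_u$. Again by Proposition \ref{BIMRtight}, $\pi_u$ is a tight representation, so by the universal property of $\Ct(S)$ there is a unique unital $*$-homomorphism $\Psi\colon \Ct(S)\to \CB(S)$ with $\Psi\circ\rho_u = \pi_u$. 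Then $\Psi\circ\Phi$ is a unital $*$-homomorphism $\CB(S)\to\CB(S)$ fixing every $\pi_u(s)$, and since these generate $\CB(S)$ it is the identity; symmetrically $\Phi\circ\Psi = \mathrm{id}_{\Ct(S)}$. Hence $\Phi$ is an isomorphism. Composing with Exel's isomorphism $\Ct(S)\cong C^*(\gt(S))$ finishes the proof.

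One small technical point deserves a sentence of care, and it is the only place where anything could go wrong: one must check that the generators-and-relations presentations of $\CB(S)$ and $\Ct(S)$ really do have the universal properties used above in the precise form ``every representation of the appropriate class factors uniquely.'' For $\CB(S)$ this is immediate from its definition in the excerpt. For $\Ct(S)$ one needs that the tight relations \eqref{tightequation} are exactly the relations imposed in its definition, together with Exel's result \cite[Proposition 11.8]{Ex08} (or the equivalent) that the standard map being tight is equivalent to all those cover relations holding; this is standard and is already implicitly used when the excerpt asserts $\Ct(S)\cong C^*(\gt(S))$. So I do not anticipate a genuine obstacle here: the substantive content was Proposition \ref{BIMRtight}, and once that is in hand Theorem \ref{BIMtight} is a formal consequence of the two universal properties plus a citation.
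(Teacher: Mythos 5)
Your proposal is correct and matches the paper's (implicit) argument exactly: the paper derives Theorem \ref{BIMtight} directly from Proposition \ref{BIMRtight} by observing that Boolean inverse monoid representations and tight representations are the same class, so the two universal C*-algebras coincide, and then cites Exel for $\Ct(S)\cong C^*(\gt(S))$. Your spelled-out universal-property argument with the mutually inverse maps $\Phi$ and $\Psi$ is just a more explicit rendering of the same reasoning.
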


In what follows, we will be studying traces on C*-algebras arising from Boolean inverse monoids. However, many of our examples will actually arise from inverse monoids which are not distributive, and so the Boolean inverse monoid in question will actually be $\gt(S)^a$, see \eqref{NCstone}. The map from $S$ to $\gt(S)^a$ defined by 
\[
s\mapsto \Theta(s, D_{s^*s}^\theta)
\]
may fail to be injective, and so we cannot say that a given inverse monoid can be embedded in a Boolean inverse monoid. The obstruction arises from the following situation: suppose $S$ is an inverse semigroup and that we have $e, f\in E(S)$ such that $e\leqslant f$ and for all $0\neq k \leqslant f$ we have $ek \neq 0$, in other words, $\{e\}$ is a cover for $\{f\}^\downarrow$. In such a situation, we say that $e$ is {\em dense} in $f$\footnote{This is the terminology used in \cite[Definition 11.10]{Ex08} and \cite{Ex09}, though in \cite[Section 6.3]{LS14} such an $e$ is called {\em essential} in $f$.}, and by \eqref{tightequation} we must have that $\pi(e) = \pi(f)$ (see also \cite{Ex09} and \cite[Proposition 11.11]{Ex08}). For most of our examples, we will be considering inverse semigroups which have faithful tight representations, though we consider one which does not.

We close this section by recording some consequences of Theorem \ref{BIMtight}. The tight groupoid and tight C*-algebra of an inverse semigroup were extensively studied in \cite{EP16} and \cite{Ste16}, where they gave conditions on $S$ which imply that $\Ct(S)$ is simple and purely infinite. We first recall some definitions from \cite{EP16}.
\begin{defn}
Let $S$ be an inverse semigroup, let $s\in S$ and $e\leqslant s^*s$. Then we say that
\begin{enumerate}
\item $e$ is {\em fixed} by $s$ if $se = e$, and
\item $e$ is {\em weakly fixed} by $s$ if for all $0\neq f\leqslant e$, $fsfs^*\neq 0$.
\end{enumerate}
Denote by $\mathcal{J}_s := \{e\in E(S)\mid se = e\}$ the set of all fixed idempotents for $s\in S$. We note that an inverse semigroup for which $\J_s = \{0\}$ for all $s\notin E(S)$ is called {\em E*-unitary}.
\end{defn}

\begin{theo}
Let $S$ be an inverse semigroup. Then
\begin{enumerate}
\item $\gt(S)$ is Hausdorff if and only if $\J_s$ has a finite cover for all $s\in S$. \cite[Theorem 3.16]{EP16}
\item If $\gt(S)$ is Hausdorff, then $\gt(S)$ is essentially principal if and only if for every $s\in S$ and every $e\in E(S)$ weakly fixed by $s$, there exists a finite cover for $\{e\}$ by fixed idempotents. \cite[Theorem 4.10]{EP16}
\item $\gt(S)$ is minimal if and only if for every nonzero $e, f\in E(S)$, there exist $F\fs S$ such that $\{esfs^*\mid s\in F\}$ is a cover for $\{e\}$.\cite[Theorem 5.5]{EP16}
\end{enumerate}
\end{theo}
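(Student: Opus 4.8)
\emph{Plan.} The three statements are Theorems 3.16, 4.10 and 5.5 of \cite{EP16}, so for the paper itself it suffices to cite these; I record how I would prove them directly, since all three fall out of the same mechanism. Work with $\gt(S) = \g(\theta)$, the groupoid of germs of the standard action $\theta : S \curvearrowright \Et(S)$, its basic clopen sets $D_e^\theta = \{\xi \in \Et(S) : e \in \xi\}$, and its basic bisections $\Theta(s, D_{s^*s}^\theta)$. Two tools do the work. First, the dictionary between inclusions of these clopen sets and covers: for $0 \neq e \in E(S)$ and a finite $Z \subseteq \{e\}^\downarrow$, one has $D_e^\theta \subseteq \bigcup_{z \in Z} D_z^\theta$ if and only if $Z$ is a cover of $\{e\}^\downarrow$ (this is essentially \cite[Proposition 11.8]{Ex08}). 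Second, each $D_e^\theta$ is compact, so any open cover of it has a finite subcover. A minor auxiliary fact used below is that $\{D_f^\theta : 0 \neq f \in E(S)\}$ is a $\pi$-base for $\Et(S)$: given a nonempty basic open $U(X,Y) \cap \Et(S)$ and an ultrafilter $\xi$ inside it, maximality of $\xi$ produces, for each $y \in Y$, some $g_y \in \xi$ with $g_y y = 0$, and then $f = (\prod X)\bigl(\prod_{y \in Y} g_y\bigr) \in \xi$ is nonzero with $D_f^\theta \subseteq U(X,Y)$.

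For (1): an \'etale groupoid with Hausdorff unit space is Hausdorff if and only if its unit space is closed, and $\gt(S)^{(0)}$ is always open, so $\gt(S)$ is Hausdorff precisely when for every $s \in S$ the set $V_s = \{\xi \in D_{s^*s}^\theta : [s,\xi] \in \gt(S)^{(0)}\}$ is closed in $D_{s^*s}^\theta$. Unwinding the germ relation shows $[s,\xi]$ is a unit exactly when $\xi$ meets $\J_s$, so $V_s = \bigcup_{e \in \J_s} D_e^\theta$; moreover $\J_s$ is downward closed ($se=e$ and $k \leqslant e$ give $sk = s(ek) = (se)k = ek = k$), so by the dictionary a finite $F \subseteq \J_s$ satisfies $\bigcup_{f \in F} D_f^\theta = V_s$ exactly when $F$ is a cover of $\J_s$. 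If $V_s$ is closed it is compact, so the open cover $\{D_e^\theta\}_{e \in \J_s}$ admits a finite subcover $F$, which is then a finite cover of $\J_s$; conversely a finite cover $F \subseteq \J_s$ exhibits $V_s$ as a finite union of clopen sets.

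For (3): $\gt(S)$ is minimal if and only if the orbit $O_{\gt(S)}(\xi) = \{\theta_s(\xi) : s \in S,\ s^*s \in \xi\}$ of every $\xi \in \Et(S)$ is dense, and by the $\pi$-base it is enough that each such orbit meet every $D_f^\theta$ with $f \neq 0$. A short computation in $S$ shows $\theta_s(\xi) \in D_f^\theta$ iff $s^*fs \in \xi$ (if $e \in \xi$ and $e \leqslant s^*fs$ then $ses^* = ses^* f \leqslant f$ and $ses^* \neq 0$; conversely conjugate $ses^* \leqslant f$ by $s$ and use upward closure), so minimality becomes: for every nonzero $f$ and every $\xi$ there is $s$ with $s^*fs \in \xi$. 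Fixing also a nonzero $e$, this says the clopen sets $D_{s^*fs}^\theta$ cover the compact set $D_e^\theta$; a finite subcover $F \fs S$ gives $D_e^\theta = \bigcup_{s \in F} D_{e\wedge s^*fs}^\theta$, and since $e \wedge s^*fs \leqslant e$ the dictionary turns this into ``$\{e \wedge s^*fs : s \in F\}$ covers $\{e\}^\downarrow$'', which after relabelling $s \leftrightarrow s^*$ is exactly the stated condition that $\{esfs^* : s \in F\}$ cover $\{e\}$ (the elements $esfs^*$ being idempotents below $e$, since idempotents commute); the converse implication runs the identities backwards.

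For (2), assume $\gt(S)$ Hausdorff. The plan is to use that $\gt(S)$ is essentially principal if and only if no basic bisection $\Theta(s,U)$ with $U$ open and nonempty lies inside the isotropy bundle without lying inside $\gt(S)^{(0)}$ --- equivalently, for each $s$ the interior of the fixed-point set of $\theta_s$ is contained in the closure of $V_s$ --- and then to carry out the germ-level analysis of the condition $\theta_s(\xi) = \xi$. That analysis should show that such $\xi$ are, modulo a nowhere-dense set, those containing an idempotent weakly fixed by $s$, while $[s,\xi]$ fails to be a unit exactly when $\xi$ contains no idempotent fixed by $s$ (using the description of $V_s$ from (1)); demanding that this set of $\xi$ be non-generic then converts, by the dictionary and compactness as in (1), into: every $e$ weakly fixed by $s$ has a finite cover of $\{e\}^\downarrow$ by fixed idempotents. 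I expect this last paragraph --- pinning down exactly which $\xi$ satisfy $\theta_s(\xi)=\xi$ and checking that ``weakly fixed'' is the matching notion --- to be the main obstacle; it is also the place where the Hausdorff hypothesis is genuinely needed, since it makes the fixed-point sets of the $\theta_s$ (hence the isotropy bundle) closed, so that the passage between ``empty interior'' and a finite-cover statement is legitimate.
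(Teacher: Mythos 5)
The paper offers no proof of this theorem: all three items are quoted verbatim from \cite{EP16} (Theorems 3.16, 4.10 and 5.5), so your opening remark that citation suffices is exactly what the paper does. Your supplementary sketches of (1) and (3) are sound and follow the same mechanism as \cite{EP16} --- the germ-level identification of when $[s,\xi]$ is a unit (equivalently, when $\xi$ meets $\J_s$), compactness of the sets $D_e^\theta$, and the dictionary of \cite[Proposition 11.8]{Ex08} between inclusions $D_e^\theta\subseteq\bigcup_{z\in Z}D_z^\theta$ and covers of $\{e\}^\downarrow$. For (2), however, what you give is a plan rather than a proof: the step you yourself flag as ``the main obstacle'' --- characterizing the interior of the isotropy bundle and showing that the filters $\xi$ with $\theta_s(\xi)=\xi$ on a neighbourhood are exactly those detected by weakly fixed idempotents --- is the entire content of \cite[Theorem 4.10]{EP16}, so if a self-contained argument were required, item (2) would still be open. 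For the purposes of this paper the citations carry the statement, so nothing further is needed.
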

We translate the above to the case where $S$ is a Boolean inverse monoid.

\begin{prop}
Let $S$ be a Boolean inverse monoid. Then
\begin{enumerate}
\item $\gt(S)$ is Hausdorff if and only if for all $s\in S$, there exists an idempotent $e_s$ with $se_s = e_s$ such that if $e$ is fixed by $s$, then $e\leqslant e_s$.
\item If $\gt(S)$ is Hausdorff, then $\gt(S)$ is essentially principal if and only if for every $s\in S$, $e$ weakly fixed by $s$ implies $e$ is fixed by $s$.
\item $\gt(S)$ is minimal if and only if for every nonzero $e, f\in E(S)$, there exist $F\fs S$ such that $e\leqslant \bigvee_{s\in F}sfs^*$.
\end{enumerate}
\end{prop}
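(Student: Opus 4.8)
The plan is to obtain each of the three items directly from the corresponding item of the preceding theorem, using the extra structure of a Boolean inverse monoid to trade a ``finite cover'' for a single join. The one piece of genuine content is an elementary lemma which I would state and prove first: \emph{if $S$ is a Boolean inverse monoid, $e\in E(S)$, and $G$ is a finite set of idempotents with $g\leqslant e$ for every $g\in G$, then $G$ is a cover of $\{e\}^{\downarrow}$ if and only if $\bigvee G = e$.} If $\bigvee G = e$ and $0\neq h\leqslant e$, then $h = h\wedge\bigvee G = \bigvee_{g\in G}(hg)$ by distributivity of the Boolean algebra $E(S)$, so $hg\neq 0$ for some $g$; thus $G$ is a cover. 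Conversely, if $G$ is a cover but $g_0:=\bigvee G\neq e$, then $e\wedge g_0^{\perp}$ is a nonzero idempotent below $e$ which is orthogonal to every element of $G$ (each $g\leqslant g_0$), contradicting that $G$ covers $\{e\}^{\downarrow}$. (Recall that every element below an idempotent is itself an idempotent, so $\{e\}^{\downarrow}\subset E(S)$ and the statement makes sense.)

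Next I would record the two structural facts about $\J_s$ that make the translation go through: $\J_s$ is an order ideal of $E(S)$ --- if $se=e$ and $f\leqslant e$ then $sf=sef=(se)f=ef=f$ --- and $\J_s$ is closed under finite joins, since any two idempotents are compatible and $s(e\vee f)=se\vee sf=e\vee f$. (One also notes that $se=e$ forces $e\leqslant s^*s$, so the domain hypothesis in the definition of ``weakly fixed'' is automatically respected.) For item (1): if $\J_s$ has a finite cover $C$, put $e_s:=\bigvee C\in\J_s$; for any $e\in\J_s$ with $e\not\leqslant e_s$ the nonzero idempotent $e\wedge e_s^{\perp}$ lies in $\J_s$ yet is orthogonal to all of $C$, which is impossible, so $e_s$ dominates every fixed idempotent. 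Conversely, such an $e_s$ exhibits $\{e_s\}$ as a finite cover of $\J_s$. By the Hausdorffness criterion of the preceding theorem this proves (1).

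For item (2): a finite cover of $\{e\}^{\downarrow}$ by fixed idempotents is, by the lemma, a finite $G\subset\J_s$ with $g\leqslant e$ and $\bigvee G=e$; since $\bigvee G\in\J_s$, the existence of such a $G$ is equivalent to $e\in\J_s$, i.e.\ to $e$ being fixed by $s$. Feeding this into the essential-principality criterion gives (2). For item (3): each $sfs^*$ is an idempotent, so $\{esfs^*\mid s\in F\}$ is, for $F\fs S$, a finite set of idempotents below $e$ whose join equals $e\wedge\bigvee_{s\in F}sfs^*$; the lemma then says this set covers $\{e\}^{\downarrow}$ exactly when $e\leqslant\bigvee_{s\in F}sfs^*$, and combining with the minimality criterion of the preceding theorem yields (3).

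I do not anticipate a serious obstacle; the only things needing care are bookkeeping conventions from \cite{EP16} --- that a ``finite cover for $\{e\}$'' means a cover of the principal order ideal $\{e\}^{\downarrow}$ by idempotents $\leqslant e$, and that ``cover'' quantifies only over nonzero elements (so $0\in\J_s$ causes no trouble) --- together with keeping track of the domain condition $e\leqslant s^*s$ in items (1) and (2). Once these are pinned down, all three equivalences are immediate from the lemma and the elementary description of $\J_s$.
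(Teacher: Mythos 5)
Your proposal is correct and follows essentially the same route as the paper: the paper's own proof simply says that statements 2 and 3 follow by ``taking the joins of the finite covers mentioned'' and defers statement 1 to Lemma \ref{glblemma}, whose argument (a cover's join dominates every element of the covered set, shown via the complement $ee_C^\perp$) is exactly your key lemma. You have merely made explicit the bookkeeping (that $\J_s$ is a join-closed order ideal, and the equivalence ``finite cover of $\{e\}^{\downarrow}$ with join $e$'') that the paper leaves implicit.
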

\begin{proof}
Statements 2 and 3 are easy consequences of taking the joins of the finite covers mentioned. Statement 1 is central to what follows, and is proven in Lemma \ref{glblemma}.
\end{proof}

If an \'etale groupoid $\g$ is Hausdorff, then $C^*(\g)$ is simple if and only if $\g$ is essentially principal, minimal, and satisfies weak containment, see \cite{BCFS14} (also see \cite{ES15} for a discussion of amenability of groupoids associated to inverse semigroups).




\section{Invariant means and traces}
In this section we consider invariant means on Boolean inverse monoids, and show that such functions always give rise to traces on the associated C*-algebras. This definition is from \cite{KLLR15}.
\begin{defn}
Let $S$ be a Boolean inverse monoid. A nonzero function $\mu: E(S) \to [0,\infty)$ will be called an {\em invariant mean} if 
\begin{enumerate}
\item $\mu(s^*s) = \mu(ss^*)$ for all $s\in S$
\item $\mu(e\vee f) = \mu(e) + \mu(f)$ for all $e, f\in E(S)$ such that $ef = 0$.
\end{enumerate}
If in addition $\mu(1) = 1$, we call $\mu$ a {\em normalized invariant mean}. An invariant mean $\mu$ will be called {\em faithful} if $\mu(e) = 0$ implies $e = 0$. We will denote by $M(S)$ the affine space of all normalized invariant means on $S$. 
\end{defn}

We make an important assumption on the Boolean inverse monoids we consider here. This assumption is equivalent to the groupoid $\gt(S)$ being Hausdorff \cite[Theorem 3.16]{EP16}.\footnote{In \cite{StLCM}, we define condition (H) for another class of semigroups, namely the right LCM semigroups. Right LCM semigroups and inverse semigroups are related, but the intersection of their classes is empty (because right LCM semigroups are left cancellative and we assume that our inverse semigroups have a zero element). We note that a right LCM semigroup $P$ satisfies condition (H) in the sense of \cite{StLCM} if and only if its left inverse hull $I_l(P)$ satisfies condition \eqref{condH} in the sense of the above.}
\begin{equation}\label{condH}\tag{H}
\text{For every }s\in S,\text{ the set }\J_s = \{e\in E(S)\mid se = e\}\text{ admits a finite cover.}
\end{equation}

The next lemma records straightforward consequences of condition \eqref{condH} when $S$ happens to be a Boolean inverse monoid.

\begin{lem}\label{glblemma}
Let $S$ be Boolean inverse monoid which satisfies condition \eqref{condH}. Then,
\begin{enumerate}
\item  for each $s\in S$ there is an idempotent $e_s$ such that for any finite cover $C$ of $\J_s$, 
\begin{equation}\label{esdef}
e_s = \bigvee_{c\in C}c.
\end{equation}
 and $\J_s = \J_{e_s}$,
\item $e_{s^*} = e_s$ for all $s\in S$,
\item $e_{st}\leqslant ss^*, t^*t$ for all $s, t\in S$, and 
\item $e_{s^*t}e_{t^*r}\leqslant e_{s^*r}$ for all $s,t,r\in S$.
\end{enumerate}
\end{lem}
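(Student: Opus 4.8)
The plan is to first record a handful of elementary facts about the sets $\J_a$ for $a\in S$ and then read all four statements off from them. Throughout I will use the order identity, valid for $f\in E(S)$ and any $a\in S$, that $af=f\iff f\leqslant a\iff fa=f$; taking adjoints of $af=f$ this also gives $af=f\iff a^*f=f$, so that $\J_a=\J_{a^*}$. I will also use that $\J_a$ is a down-set in $E(S)$ (if $f\in\J_a$ and $g\leqslant f$ then $ag=a(fg)=(af)g=fg=g$) and, by distributivity, is closed under finite joins; and that every $f\in\J_a$ satisfies $f=(af)(af)^*\leqslant aa^*$. These are all one-line computations.

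For statement~1 I would take a finite cover $C$ of $\J_s$ (available by \eqref{condH}), put $e:=\bigvee_{c\in C}c$, which lies in $\J_s$ by the above, and show $\J_s=\{f\in E(S)\mid f\leqslant e\}$: one inclusion is the down-set property, and for the other, if $f\in\J_s$ were not below $e$ then $g:=f\wedge e^{\perp}$ would be a nonzero element of $\J_s$ meeting no element of $C$ (as $c\leqslant e$ for each $c\in C$, so $cg=0$), contradicting that $C$ covers $\J_s$. Since this description of $\J_s$ does not mention $C$, the idempotent $e=:e_s$ is independent of the cover, giving \eqref{esdef}, and $\J_{e_s}=\{f\mid f\leqslant e_s\}=\J_s$. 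Statement~2 is then immediate: $\J_s=\J_{s^*}$ and $e_s$ is determined by the set $\J_s$, so $e_{s^*}=e_s$. For statement~3 I would apply the bound $f\in\J_a\Rightarrow f\leqslant aa^*$ twice, with $a=st$ (giving $e_{st}\leqslant s(tt^*)s^*\leqslant ss^*$) and with $a=(st)^*=t^*s^*$, using $e_{st}\in\J_{(st)^*}$ (giving $e_{st}\leqslant t^*(s^*s)t\leqslant t^*t$).

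Statement~4 carries the real content. I would set $x:=e_{s^*t}e_{t^*r}\in E(S)$. By statements~1 and~3, $x\leqslant e_{s^*t}$ forces $x\in\J_{s^*t}=\J_{t^*s}$ with $x\leqslant s^*s$ and $x\leqslant t^*t$, and $x\leqslant e_{t^*r}$ forces $x\in\J_{t^*r}=\J_{r^*t}$ with $x\leqslant r^*r$. It suffices to prove $x\in\J_{s^*r}$, since statement~1 then gives $x\leqslant e_{s^*r}$, which is the claim. The plan is to establish the identity $sx=tx=rx$ --- in $\I(X)$ this just says that the agreement set of $s,t$ intersected with that of $t,r$ sits inside the agreement set of $s,r$. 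Concretely, $s^*tx=x$ gives $sx=ss^*tx$, while $t^*sx=x$ gives $tx=tt^*sx$, so that $(tx)(tx)^*=tt^*(sxs^*)tt^*\leqslant ss^*tt^*\leqslant ss^*$; hence $ss^*(tx)=tx$ and therefore $sx=ss^*tx=tx$, and the symmetric argument with $r$ in place of $s$ gives $tx=rx$. Then $s^*rx=s^*(rx)=s^*(sx)=s^*sx=x$ since $x\leqslant s^*s$, so $x\in\J_{s^*r}$. The one genuinely delicate point — and the step I expect to be the main obstacle — is the passage from $sx=ss^*tx$ to $sx=tx$: it uses that $x$ lies in $\J_{t^*s}$ as well as in $\J_{s^*t}$ (so that $tx$ can be rewritten as $tt^*sx$ and its range projection controlled), which is exactly why the adjoint symmetry $\J_a=\J_{a^*}$ from the first paragraph is needed. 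Everything else is bookkeeping with the natural order and distributivity.
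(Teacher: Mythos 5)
Your proof is correct and follows essentially the same route as the paper's: the join of any finite cover of $\J_s$ dominates every element of $\J_s$ (via the orthogonal-complement trick), $\J_s=\J_{s^*}$ by taking adjoints, and part 4 reduces to showing that $s^*r$ fixes $x=e_{s^*t}e_{t^*r}$ so that $x\in\J_{s^*r}\subseteq\{f\mid f\leqslant e_{s^*r}\}$. Your part-4 argument is repackaged through the identity $sx=tx=rx$ and a range-projection estimate $(tx)(tx)^*\leqslant ss^*$ rather than the paper's direct chain $x=s^*tx=s^*tt^*rx=\cdots=s^*rx$, but it rests on exactly the same key input (namely $\J_a=\J_{a^*}$ applied to $s^*t$ and $t^*r$) and is equally valid.
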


\begin{proof}
To show the first statement, we need to show that any two covers give the same join. If $\J_s = \{0\}$, there is nothing to do. So suppose that $0 \neq e\in \J_s$, suppose that $C$ is a cover for $\J_s$, and let $e_C = \bigvee_{c\in C}c$. Indeed, the element $ee_C^\perp$ must be in $\J_s$, and since it is orthogonal to all elements of $C$ and $C$ is a cover, $ee_C^\perp$ must be 0. Hence we have
\[
e = ee_C\vee ee_C^\perp = ee_C
\]
and so $e\leqslant e_C$. Now if $K$ is another cover for $\J_s$  with join $e_K$ and $k\in K$, we must have that $k\leqslant e_C$, and so $e_K\leqslant e_C$. Since the argument is symmetric, we have proven the first statement.

To prove the second statement, if $e\in \J_s$ then we have
\[
ses^* = es^* = (se)^* = e
\]
and so 
\[
s^*e = s^*(ses^*) = es^*ss^* = es^* = (se)^* = e
\]
and again by symmetry we have $\J_s = \J_{s^*}$ and so $e_s = e_{s^*}$.

To prove the third statement, we notice
\[
ss^*e_{st} = ss^*ste_{st} = ste_{st} = e_{st}
\]
\[
e_{st}t^*t = ste_{st}t^*t = stt^*te_{st} = ste_{st} = e_{st}.
\]

For the fourth statement, we calculate (using 2)
\begin{eqnarray*}
e_{s^*t}e_{t^*r} &=& s^*te_{s^*t}e_{t^*r} = s^*tt^*re_{s^*t}e_{t^*r}\\
 &=& s^*tt^*rr^*te_{s^*t}e_{t^*r} = s^*rr^*te_{s^*t}e_{t^*r} \\
 &=& s^*r e_{s^*t}e_{t^*r}
\end{eqnarray*}
hence $e_{s^*t}e_{t^*r} \leqslant s^*r$ and so $e_{s^*t}e_{t^*r} \leqslant e_{s^*r}$.
\end{proof}
In what will be a crucial step to obtaining a trace from an invariant mean, we now obtain a relationship between $e_{st}$ and $e_{ts}$.
\begin{lem}\label{estets}
Let $S$ be Boolean inverse monoid which satisfies condition \eqref{condH}. Then for all $s, t\in S$, we have that $s^*e_{st}s = e_{ts}$.
\end{lem}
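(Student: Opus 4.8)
The plan is to lean on Lemma \ref{glblemma}(1): for an idempotent $f$ one has $\J_{f}=\{e\in E(S)\mid e\leqslant f\}$, so that lemma says $\J_{r}=\{e\in E(S)\mid e\leqslant e_{r}\}$ for every $r\in S$ --- in other words $e_{r}$ is the greatest idempotent fixed by $r$. Hence, to prove $s^{*}e_{st}s=e_{ts}$ I would establish the two inequalities (a) $s^{*}e_{st}s\in\J_{ts}$ (which forces $s^{*}e_{st}s\leqslant e_{ts}$) and (b) $e_{ts}\leqslant s^{*}e_{st}s$, and both of these reduce to a single preparatory identity.

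First I would prove $s^{*}e_{st}=te_{st}$ (and, by taking adjoints, $e_{st}s=e_{st}t^{*}$). Since $e_{st}\in\J_{st}$ we have $st\,e_{st}=e_{st}$, so $s^{*}e_{st}=(s^{*}s)(te_{st})$ and therefore $s^{*}e_{st}\leqslant te_{st}$ in the natural order. To upgrade this to equality I would compare the domain idempotents of the two sides, which is where Lemma \ref{glblemma}(3) enters: from $e_{st}\leqslant ss^{*}$ one gets $(s^{*}e_{st})^{*}(s^{*}e_{st})=e_{st}ss^{*}e_{st}=e_{st}$, and from $e_{st}\leqslant t^{*}t$ one gets $(te_{st})^{*}(te_{st})=e_{st}t^{*}t\,e_{st}=e_{st}$; two comparable elements with equal domain idempotents coincide, so $s^{*}e_{st}=te_{st}$. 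Substituting this and then $e_{st}s=e_{st}t^{*}$,
\[
s^{*}e_{st}s=(s^{*}e_{st})\,s=t\,e_{st}\,s=t\,(e_{st}s)=t\,e_{st}\,t^{*}.
\]

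For (a) I would observe that $te_{st}t^{*}$ is self-adjoint, that it is idempotent because $e_{st}\leqslant t^{*}t$ gives $(te_{st}t^{*})^{2}=te_{st}(t^{*}t)e_{st}t^{*}=te_{st}t^{*}$, and that $ts\cdot te_{st}t^{*}=t(st\,e_{st})t^{*}=te_{st}t^{*}$; hence $te_{st}t^{*}\in\J_{ts}$, so $s^{*}e_{st}s=te_{st}t^{*}\leqslant e_{ts}$. For (b) I would run the very same computation with the names of $s$ and $t$ interchanged, obtaining $se_{ts}s^{*}\in\J_{st}$ and hence $se_{ts}s^{*}\leqslant e_{st}$; conjugating this inequality by $s$ (the natural order is preserved by multiplication) gives $(s^{*}s)e_{ts}(s^{*}s)\leqslant s^{*}e_{st}s$, and since $e_{ts}\leqslant s^{*}s$ by Lemma \ref{glblemma}(3) applied to the product $ts$, the left-hand side is $e_{ts}$. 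Together (a) and (b) give $s^{*}e_{st}s=e_{ts}$.

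I expect the only real difficulty to be the preparatory identity $s^{*}e_{st}=te_{st}$. It looks self-evident --- on the ``piece'' $e_{st}$ the product $st$ acts as the identity, so $t$ and $s^{*}$ ought to agree there --- but turning this into an equation of semigroup elements genuinely requires both containments $e_{st}\leqslant ss^{*}$ and $e_{st}\leqslant t^{*}t$ from Lemma \ref{glblemma}(3), which is exactly what prevents $s^{*}e_{st}$ or $te_{st}$ from being ``defined on too much.'' Everything past that point is routine manipulation of commuting idempotents and of the natural partial order.
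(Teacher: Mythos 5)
Your proof is correct, and it takes a different route from the paper's. The paper argues at the level of covers: it first checks that conjugation by $s$ carries $\J_{ts}$ into $\J_{st}$ and that the image $s^{*}Cs$ of a finite cover $C$ of $\J_{st}$ is a finite cover of $\J_{ts}$, and then invokes the cover-independence part of Lemma \ref{glblemma}(1) together with distributivity to get $e_{ts}=\bigvee_{c\in C}s^{*}cs=s^{*}e_{st}s$ in one stroke. You instead use the other face of Lemma \ref{glblemma}(1) --- that $e_{r}$ is the \emph{maximum} of $\J_{r}$ --- and prove the two inequalities separately, which lets you transport only the single element $e_{st}$ rather than a whole cover and spares you the (slightly glossed-over in the paper) verification that $s^{*}Cs\subset\J_{ts}$. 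The price is your preparatory identity $s^{*}e_{st}=te_{st}$, whose proof via ``comparable elements with equal domain idempotents coincide'' is correct and correctly isolates where Lemma \ref{glblemma}(3) is needed; as a bonus it gives the pleasant reformulation $s^{*}e_{st}s=te_{st}t^{*}$, consistent with the remark in the paper that $e_{r}=r\wedge r^{*}r$. Both arguments ultimately rest on the same two facts (conjugation of fixed idempotents and Lemma \ref{glblemma}), so neither is more general, but yours is the more self-contained of the two.
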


\begin{proof}
Suppose that $e\in \J_{ts}$. Then $tse = e$, and so 
\[
(st)ses^* = ses^* 
\]
hence $ses^*\in \J_{st}$. If $C$ is a cover of $\J_{st}$ and $f\in \J_{ts}$, there must exist $c\in C$ such that $c(sfs^*)\neq 0$. Hence
\begin{eqnarray*}
css^*sfs^* &\neq& 0\\
ss^*csfs^* &\neq& 0\\
s^*csf &\neq &0
\end{eqnarray*}
and so we see that $s^*Cs$ is a cover for $\J_{ts}$. By Lemma \ref{glblemma},
\[
e_{ts} = \bigvee_{c\in C}s^*cs = s^*\left(\bigvee_{c\in C}c\right)s = s^*e_{st}s.
\]
\end{proof}
Lemma \ref{estets} and Lemma \ref{glblemma}.3 imply that for all $s,t\in S$ and all $\mu\in M(S)$, we have $\mu(e_{st}) = \mu(e_{ts})$. 
\begin{rmk}
We are thankful to Ganna Kudryavtseva for pointing out to us that the proofs Lemmas \ref{glblemma} and \ref{estets} can be simplified by using the fact from \cite[Theorem 8.20]{KL14} that a Boolean inverse monoid $S$ satisfies condition \eqref{condH} if and only if every pair of elements in $S$ has a meet (see also \cite[Proposition 3.7]{Ste10} for another wording of this fact). From this, one can see that for all $s\in S$ we have
\[
e_s = s\wedge(s^*s) = s\wedge (ss^*).
\]
\end{rmk}
\begin{defn}
Let $A$ be a C*-algebra. A bounded linear functional $\tau:A\to \CC$ is called a {\em trace} if 
\begin{enumerate}
\item $\tau(a^*a) \geq 0$ for all $a\in A$,
\item $\tau(ab) = \tau(ba)$ for all $a, b\in A$.
\end{enumerate}
A trace $\tau$ is said to be {\em faithful} if $\tau(a^*a) >0$ for all $a\neq 0$. A trace $\tau$ on a unital C*-algebra is called a {\em tracial state} if $\tau(1) = 1$. The set of all tracial states of a C*-algebra $A$ is denoted $T(A)$.
\end{defn}

We are now able to define a trace on $\CB(S)$ for each $\mu\in M(S)$. 

\begin{prop}\label{taumu}
Let $S$ be Boolean inverse monoid which satisfies condition \eqref{condH}, and let $\mu\in M(S)$. Then there is a trace $\tau_\mu$ on $\CB(S)$ such that 
\[
\tau_\mu(\delta_s) = \mu(e_s)\hspace{1cm} \text{for all }s\in S.
\]
If $\mu$ is faithful, then the restriction of $\tau_\mu$ to $\Cr(\gt(S))$ is a faithful trace.
\end{prop}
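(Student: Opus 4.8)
The statement to prove is Proposition \ref{taumu}: from an invariant mean $\mu\in M(S)$ we build a trace $\tau_\mu$ on $\CB(S)=C^*(\gt(S))$ with $\tau_\mu(\delta_s)=\mu(e_s)$, and if $\mu$ is faithful then $\tau_\mu$ restricted to $\Cr(\gt(S))$ is faithful. The natural route is to pass through the groupoid picture. Since $\CB(S)\cong C^*(\gt(S))$ by Theorem \ref{BIMtight}, and $C_c(\gt(S))$ is spanned by the functions $1_{\Theta(s,D^\theta_{s^*s})}$, it suffices to define a positive linear functional on $C_c(\gt(S))$ that is bounded for the universal (hence also the reduced) norm and satisfies the trace identity $\tau(fg)=\tau(gf)$ on $C_c$, then extend by continuity. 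The candidate is $\tau_\mu(f)=\nu(E(f))$ where $E:C_c(\gt(S))\to C(\gt(S)^{(0)})$ is the canonical conditional expectation from \eqref{conditionalexpectation} and $\nu$ is the measure on $\Et(S)$ corresponding to $\mu$ under Stone duality: $\nu(D^\theta_e)=\mu(e)$. (One needs condition \eqref{condH} precisely so that $\gt(S)$ is Hausdorff and $E$ exists.)

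First I would set up $\nu$: since $E(S)$ is a Boolean algebra with Stone space $\Et(S)=\Eu(S)$, the finitely additive function $e\mapsto\mu(e)$ on clopen sets extends (by the Riesz representation / Carath\'eodory argument on a Stone space, where finite additivity on a compact zero-dimensional space already gives premeasure regularity) to a regular Borel probability measure $\nu$ on $\Et(S)$, with $\nu(D^\theta_e)=\mu(e)$; the normalization $\mu(1)=1$ gives $\nu(\Et(S))=1$. Then $\tau_\mu:=\nu\circ E$ is positive on $C_c(\gt(S))$ because $E$ is a conditional expectation (so $E(f^*f)\geq0$) and $\nu$ is a positive measure, and $|\tau_\mu(f)|\leq\|E(f)\|_\infty\leq\|f\|$ for the $C^*$-norm, so it extends to a bounded functional on $C^*(\gt(S))$ and on $\Cr(\gt(S))$. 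The value $\tau_\mu(\delta_s)=\nu(E(1_{\Theta(s,D^\theta_{s^*s})}))$ is $\nu$ of the set of fixed units of $s$ inside $D^\theta_{s^*s}$, which is exactly $D^\theta_{e_s}$ by Lemma \ref{glblemma} (this is where $e_s$ enters), so $\tau_\mu(\delta_s)=\nu(D^\theta_{e_s})=\mu(e_s)$.

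The main obstacle is the trace identity. It suffices to check $\tau_\mu(\delta_s\delta_t)=\tau_\mu(\delta_t\delta_s)$ for $s,t\in S$, since these span a dense $*$-subalgebra; and $\delta_s\delta_t=\delta_{st}$, so this reduces to $\mu(e_{st})=\mu(e_{ts})$. That identity is precisely the consequence of Lemma \ref{estets} together with Lemma \ref{glblemma}.3 that the text has just flagged: $s^*e_{st}s=e_{ts}$, so $e_{ts}\leq s^*s$ with $s\,e_{ts}\,s^*\leq ss^*$ and $s(s^*e_{st}s)s^*=ss^*e_{st}ss^*=e_{st}$ (using $e_{st}\leq ss^*$), hence $e_{st}$ and $e_{ts}$ are related by $s\cdot s^*$-conjugation, and axiom (1) of an invariant mean (applied to an element with $ss^*$ as range idempotent and $e_{ts}$ as source idempotent) gives $\mu(e_{st})=\mu(e_{ts})$. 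I would write this out carefully: take $u=s e_{ts}$ (or $u = e_{st} s$), check $u^*u = e_{ts}$ and $uu^* = e_{st}$ using the displayed identities, and invoke invariance. Linearity and continuity then extend the trace identity from the dense subalgebra to all of $C^*(\gt(S))$, and hence to $\Cr(\gt(S))$.

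Finally, for faithfulness of $\tau_\mu$ on $\Cr(\gt(S))$ when $\mu$ is faithful: here one uses that the conditional expectation $E$ on $\Cr(\gt(S))$ is faithful (stated in the excerpt right after \eqref{conditionalexpectation}), i.e. $E(a^*a)=0\Rightarrow a=0$. So it is enough to show $\nu$ is faithful on $C(\Et(S))$, i.e. $\nu(g)=0$ for $g\geq0$ forces $g=0$; equivalently $\nu$ has full support. If some nonempty open $V\subset\Et(S)$ had $\nu(V)=0$, then $V$ contains a nonempty basic clopen set $D^\theta_e$ with $e\neq0$, so $0=\nu(D^\theta_e)=\mu(e)$, contradicting faithfulness of $\mu$. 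Thus for $a\in\Cr(\gt(S))$ with $\tau_\mu(a^*a)=\nu(E(a^*a))=0$, positivity of $E(a^*a)\in C(\Et(S))$ and full support of $\nu$ give $E(a^*a)=0$, whence $a=0$ by faithfulness of $E$ on the reduced algebra. I expect the extension/boundedness bookkeeping and the premeasure-to-measure step to be routine; the one genuinely load-bearing point is the reduction of the trace property to $\mu(e_{st})=\mu(e_{ts})$ and its verification via Lemmas \ref{glblemma} and \ref{estets}.
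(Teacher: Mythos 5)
Your proof is correct, and the key computations (reducing the trace identity to $\mu(e_{st})=\mu(e_{ts})$ via $u=e_{st}s$, $uu^*=e_{st}$, $u^*u=s^*e_{st}s=e_{ts}$ from Lemmas \ref{glblemma} and \ref{estets}; using $\tau_\mu=\tau_\mu\circ E$, $E(\delta_s)=\delta_{e_s}$, faithfulness of $E$ on $\Cr(\gt(S))$ and full support of $\nu$ for the faithfulness claim) coincide with the paper's. Where you genuinely diverge is the positivity step, which is the technical heart of the paper's argument. The paper works entirely inside the dense spanning set $B=\operatorname{span}\{\delta_s\}$: for $x=\sum_i a_i\delta_{s_i}$ it expands $\tau_\mu(x^*x)=\sum_{i,j}\overline{a_i}a_j\mu(e_{s_i^*s_j})$, decomposes the idempotents $e_{s_i^*s_j}$ into orthogonal atoms indexed by partitions of the pair set, uses Lemma \ref{glblemma}.4 to show that on each nonzero atom the relation ``$\{i,j\}$ survives'' is an equivalence relation, and rewrites the quadratic form as $\sum_C\bigl|\sum_{i\in C}a_i\bigr|^2\geq 0$. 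You instead build the regular Borel measure $\nu$ on the Stone space $\Et(S)$ directly from $\mu$ (finite additivity on a clopen algebra of compact sets gives a premeasure, hence a measure) and set $\tau_\mu=\nu\circ E$, so positivity and boundedness come for free from positivity of the conditional expectation and of $\nu$. Your route is shorter and makes the link with Proposition \ref{etanu} and Theorem \ref{trace-measure} transparent (indeed it yields the surjectivity half of Proposition \ref{etanu} as a byproduct, whereas the paper derives that afterwards by applying Riesz to $\tau_\mu$); the paper's route is purely combinatorial at the level of the Boolean inverse monoid and avoids the measure-extension and conditional-expectation machinery for positivity. Both uses of condition \eqref{condH} (Hausdorffness, existence of $e_s$ and of $E$) are the same. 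No gaps.
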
 
\begin{proof}
We define $\tau_\mu$ to be as above on the generators $\delta_s$ of $\CB(S)$, and extend it to $B:= $span$\{\delta_s\mid s\in S\}$, a dense $*$-subalgebra of $\CB(S)$. 

We first show that $\tau_\mu(\delta_s\delta_t) = \tau_\mu(\delta_t\delta_s)$. Indeed, by Lemmas \ref{glblemma} and \ref{estets}, we have
\begin{eqnarray*}
\tau_\mu(\delta_s\delta_t)&=& \mu(e_{st}) = \mu(e_{st}ss^*) = \mu(e_{st}ss^*e_{st}) = \mu((e_{st}s)(e_{st}s)^*)\\
& =& \mu((e_{st}s)^*(e_{st}s)) = \mu(s^*e_{st}s) = \mu(e_{ts}) = \tau_\mu(\delta_t\delta_s).
\end{eqnarray*}
Since $\tau_\mu$ is extended linearly to $B$, we have that $\tau_\mu(ab) = \tau_\mu(ba)$ for all $a, b\in B$. 

Let $F$ be a finite index set and take $x = \sum_{i\in F}a_i\delta_{s_i}$ in $B$. We will show that $\tau_\mu(x^*x) \geq 0$. For $i, j\in F$, we let $e_{ij} = e_{s_i^*s_j}$ and note that $e_{ij} = e_{ji}$. We calculate:

\begin{eqnarray*}
x^*x &=& \left(\sum_{s\in S}\overline{a_i}\delta_{s_i^*}\right)\left(\sum_{j\in F}a_j\delta_{s_j}\right)\\
     &=& \sum_{i, j\in F}\overline{a_i}a_j\delta_{s_i^*s_j}\\
\tau_\mu(x^*x)    &=& \sum_{i,j\in F} \overline{a_i}a_j \mu(e_{ij})\\
   &=& \sum_{i\in F} |a_i|^2\mu(e_{ii}) + \sum_{i, j\in F, i\neq j}(\overline{a_i}a_j +\overline{a_j}a_i)\mu(e_{ij}).
\end{eqnarray*}
We will show that this sum is positive by using an orthogonal decomposition of the $e_{ij}$. Let $F^2_{\neq} = \{ \{i,j\}\subset F \mid i\neq j\}$, and let $D(F^2_{\neq})=\{(A,B)\mid A\cup B = F^2_{\neq}, A\cap B = \emptyset\}$. For $a = \{i, j\}\in F^2_{\neq}$, let $e_a = e_{ij}$. We have
\[
e_{ij} = e_{ij} \bigvee_{(A, B)\in D(F^2_{\neq})} \left(\prod_{a\in A, b\in B}e_ae_b^\perp\right)
\]
where the join is an orthogonal join. Of course, the above is only nonzero when $\{i,j\}\in A$. We also notice that
\[
e_{ii} \geqslant \bigvee_{\substack{(A, B)\in D(F^2_{\neq})\\ i\in \cup A}} \left(\prod_{a\in A, b\in B}e_ae_b^\perp\right)
\]
and so $\tau_\mu(x^*x)$ is larger than a linear combination of terms of the form $\mu\left(\prod_{a\in A, b\in B}e_ae_b^\perp\right)$ for partitions $(A,B)$ of $F^2_{\neq}$: specifically, $\tau_\mu(x^*x)$ is greater than or equal to 
\begin{equation}\label{tracesum}
\sum_{(A, B)\in D(F^2_{\neq})}\left[\left(\sum_{i\in \cup A}|a_i|^2 + \sum_{a = \{j,k\}\in A}\left(\overline{a_i}a_j +\overline{a_j}a_i\right)\right)\mu\left(\prod_{a\in A, b\in B}e_ae_b^\perp\right)\right]
\end{equation}
If a term $\prod_{a\in A, b\in B}e_ae_b^\perp$ is not zero, then we claim that the relation
\[
i \sim j \text{ if and only if } i = j \text{ or } \{i, j\} \in A
\]
is an equivalence relation on $\cup A$. Indeed, suppose that $i, j, k\in \cup A$ are all pairwise nonequal and $\{i,j\}, \{j,k\}\in A$. By Lemma \ref{glblemma}.4, $e_{ij}e_{jk}\leqslant e_{ik}$ and since the product is nonzero, we must have that $\{i,k\}\in A$. Writing $[\cup A]$ for the set of equivalence classes, we have 
\begin{eqnarray*}
\sum_{i\in \cup A}|a_i|^2 + \sum_{a = \{j,k\}\in A}\left(\overline{a_i}a_j +\overline{a_j}a_i\right) &=& \sum_{C\in [\cup A]} \left(\sum_{i\in C}|a_i|^2 + \sum_{\substack{i,j\in C\\ i\neq j}} \left(\overline{a_i}a_j +\overline{a_j}a_i\right) \right)\\
 &=& \sum_{C\in [\cup A]} \left|\sum_{i\in C}a_i  \right|^2.
\end{eqnarray*}
Hence, $\tau_\mu(x^*x)\geq 0$, and $\tau_\mu$ is positive on $B$. Hence, $\tau_\mu$ extends to a trace on $\CB(S)$. 

The above calculation shows that if $\mu$ is faithful, then $\tau_\mu$ is faithful on $B$. A short calculation shows that $E(\delta_s) = \delta_{e_s}$, where $E$ is as in \eqref{conditionalexpectation}. Furthermore, it is clear that on $B$ we have that $\tau_\mu = \tau_\mu\circ E$, and so we will show that $\tau_\mu$ is faithful on $\Cr(\gt(S))$ if we show that $\tau_\mu(a)> 0$ for all nonzero positive $a\in  C(\Et(S))$. If $a\in C(\Et(S))$ is positive, then it is bounded above zero on some clopen set given by $D_e$ for some $e\in E(S)$. Hence, $\tau_\mu(a)\geq \tau_\mu(\delta_e) = \mu(e)$ which must be strictly positive because $\mu$ is faithful.
\end{proof}
We now show that given a trace on $\CB(S)$ we can construct an invariant mean on $S$.
\begin{prop}\label{mutau}
Let $S$ be Boolean inverse monoid, let $\pi_u: S \to \CB(S)$ be the universal Boolean monoid representation of $S$, and take $\tau\in T(\CB(S))$. Then the map $\mu_\tau : E(S) \to [0,\infty)$ defined by 
\[
\mu_\tau(e) = \tau(\pi_u(e)) = \tau(\delta_e)
\]
is a normalized invariant mean on $S$. If $\tau$ is faithful then so is $\mu_\tau$.
\end{prop}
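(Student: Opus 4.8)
The plan is to verify directly that $\mu_\tau$ satisfies each clause in the definition of a normalized invariant mean, using only the trace axioms for $\tau$ and the fact (Definition \ref{IMdef}) that $\pi_u$ is a Boolean inverse monoid representation. For $e\in E(S)$ the element $\delta_e=\pi_u(e)$ is a projection: self-adjoint since $\pi_u(e)^*=\pi_u(e^*)=\pi_u(e)$, and idempotent since $\pi_u(e)^2=\pi_u(e^2)=\pi_u(e)$. Hence $\delta_e=\delta_e^*\delta_e\ge 0$, so $\mu_\tau(e)=\tau(\delta_e)\ge 0$ by positivity of $\tau$, and this is finite since $\tau$ is bounded; thus $\mu_\tau$ maps $E(S)$ into $[0,\infty)$. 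For the normalization I would use that $\pi_u$, being a tight representation by Proposition \ref{BIMRtight}, satisfies the tight relation \eqref{tightequation} with $X=Y=\emptyset$ and $Z=\{1\}$ (a cover of $E(S)$), which reads $\delta_1=1_{\CB(S)}$; therefore $\mu_\tau(1)=\tau(1)=1$, and in particular $\mu_\tau\neq 0$.

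Next I would check the two invariance conditions. For condition (1): multiplicativity and $*$-compatibility of $\pi_u$ give $\delta_{s^*s}=\delta_s^*\delta_s$ and $\delta_{ss^*}=\delta_s\delta_s^*$, so $\mu_\tau(s^*s)=\tau(\delta_s^*\delta_s)=\tau(\delta_s\delta_s^*)=\mu_\tau(ss^*)$, the middle equality being the trace identity $\tau(ab)=\tau(ba)$. For condition (2): any $e,f\in E(S)$ are compatible, since $e^*f=ef$ and $ef^*=ef$ both lie in $E(S)$, so $e\vee f$ exists and clause (4) of Definition \ref{IMdef} gives $\delta_{e\vee f}=\delta_e+\delta_f-\delta_{ee^*f}=\delta_e+\delta_f-\delta_{ef}$; when $ef=0$ the last term is $\pi_u(0)=0$, and applying the linear functional $\tau$ yields $\mu_\tau(e\vee f)=\mu_\tau(e)+\mu_\tau(f)$. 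Together with the first paragraph this shows $\mu_\tau\in M(S)$.

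For faithfulness, suppose $\tau$ is faithful and $\mu_\tau(e)=0$; then $\tau(\delta_e^*\delta_e)=\tau(\delta_e)=0$ forces $\delta_e=\pi_u(e)=0$, and it remains to deduce $e=0$. This implication — the injectivity of $\pi_u$ on $E(S)$ — is the one step that needs more than bookkeeping, and I expect it to be the main (though mild) obstacle. I would obtain it from Theorem \ref{BIMtight}: under the identification $\CB(S)\cong C^*(\gt(S))$ the projection $\delta_e$ becomes the characteristic function $1_{D_e^\theta}$ of the clopen set $D_e^\theta=\{\xi\in\Et(S)\mid e\in\xi\}$, and since $E(S)$ is a Boolean algebra every nonzero $e$ lies in some ultrafilter, so $D_e^\theta\neq\emptyset$ and hence $\delta_e\neq 0$. (Equivalently, invoke noncommutative Stone duality, which identifies $S$ with $\gt(S)^a$ and thereby makes the standard representation of a Boolean inverse monoid faithful.) Thus $e\neq 0$ implies $\mu_\tau(e)\neq 0$, so $\mu_\tau$ is faithful, which completes the proof.
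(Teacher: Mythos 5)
Your proof is correct and follows essentially the same route as the paper: positivity of $\tau$ gives $\mu_\tau\ge 0$, the trace property applied to $\delta_s^*\delta_s$ versus $\delta_s\delta_s^*$ gives invariance, and clause (4) of Definition \ref{IMdef} with $\pi_u(0)=0$ gives additivity on orthogonal idempotents. The only difference is that you supply justifications the paper leaves implicit — namely $\delta_1=1$ for the normalization and the nonvanishing of $\delta_e$ for $e\neq 0$ in the faithfulness claim — and both of your justifications are sound.
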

\begin{proof}
That $\mu_\tau$ takes positive values follows from $\tau$ being positive. We have
\begin{eqnarray*}
\mu_\tau(s^*s) &=& \tau(\pi_u(s^*s)) = \tau(\pi_u(s^*)\pi_u(s))\\
 & =& \tau(\pi_u(s)\pi_u(s^*)) = \tau(\pi_u(ss^*)) \\
&=& \mu_\tau(ss^*).
\end{eqnarray*}
Also, if $e, f\in E(S)$ with $ef = 0$, then
\begin{eqnarray*}
\mu_\tau(e\vee f) &=& \tau(\pi_u(e\vee f)) = \tau(\pi_u(e) + \pi_u(f))\\
 & =& \tau(\pi_u(e)) + \tau(\pi_u(f)) \\
 &=& \mu_\tau(e) + \mu_\tau(f).
\end{eqnarray*}
If $\tau$ is faithful and $e\neq 0$, $\tau(\delta_e)> 0$ because $\delta_e$ is positive and nonzero, and so $\mu_\tau$ is faithful.
\end{proof}
\begin{prop}
Let $S$ be Boolean inverse monoid which satisfies condition \eqref{condH}. Then the map 
\[
\mu\mapsto \tau_\mu\mapsto \mu_{\tau_\mu}
\]
is the identity on $M(S)$.
\end{prop}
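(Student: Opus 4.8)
The plan is simply to unwind the two constructions on idempotents and observe that they cancel. Fix $\mu\in M(S)$; since $\tau_\mu$ is a tracial state on $\CB(S)$ (note $\tau_\mu(1)=\mu(e_1)=\mu(1)=1$), Proposition \ref{mutau} applied to $\tau=\tau_\mu$ produces $\mu_{\tau_\mu}\in M(S)$ with $\mu_{\tau_\mu}(e)=\tau_\mu(\delta_e)$ for every $e\in E(S)$. By the defining property of $\tau_\mu$ from Proposition \ref{taumu}, $\tau_\mu(\delta_e)=\mu(e_e)$, where $e_e$ is the idempotent attached to $e\in S$ by Lemma \ref{glblemma}. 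So the whole proposition collapses to the claim that $e_e=e$ for every $e\in E(S)$.

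To verify $e_e=e$, recall that $\J_e=\{f\in E(S)\mid ef=f\}=\{e\}^{\downarrow}\cap E(S)$, and that the singleton $\{e\}$ is a finite cover of $\J_e$: any nonzero $f\in\J_e$ satisfies $ef=f\neq 0$. Hence, by Lemma \ref{glblemma}.1 and formula \eqref{esdef}, we may compute $e_e$ from this particular cover and obtain $e_e=\bigvee_{c\in\{e\}}c=e$. (One could instead invoke the Remark following Lemma \ref{estets}, which gives $e_e=e\wedge(e^*e)=e\wedge e=e$ directly.) Substituting back, $\mu_{\tau_\mu}(e)=\mu(e_e)=\mu(e)$ for all $e\in E(S)$, that is, $\mu_{\tau_\mu}=\mu$.

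I do not anticipate a genuine obstacle here: the argument is purely a substitution of the defining formulas of the two maps, with the single pointwise identity $e_e=e$ as its only ingredient. The one place to exercise a little care is the assertion that $\{e\}$ covers $\J_e$, which is immediate once $\J_e$ is written in its order-theoretic form.
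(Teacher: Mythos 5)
Your proof is correct and follows the same route as the paper, which simply unwinds the two definitions and records $\mu_{\tau_\mu}(e)=\tau_\mu(\delta_e)=\mu(e)$ as immediate. Your explicit verification that $e_e=e$ (via $\{e\}$ covering $\J_e$) is a detail the paper leaves tacit, but it is exactly the right justification.
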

\begin{proof}
This is immediate, since if $\mu\in M(S)$ and $e\in E(S)$ we have
\[
\mu_{\tau_\mu}(e) = \tau_\mu(\pi_u(e))= \tau_\mu(\delta_e) = \mu(e).
\]
\end{proof}
Given the above, one might wonder under which circumstances we have that $T(\CB(S)) \cong M(S)$. This is not true in the general situation -- take for example $S$ to be the group $\ZZ_2 = \{1, -1\}$ with a zero element adjoined -- this is a Boolean inverse monoid. Here $M(S)$ consists of one element, namely the function which takes the value 1 on $1$ and the value $0$ on the zero element. The C*-algebra of $S$ is the group C*-algebra of $\ZZ_2$, which is isomorphic to $\CC^2$, a C*-algebra with many traces (taking the dot product of an element of $\CC^2$ with any nonnegative vector whose entries add to 1 determines a normalized trace on $\CC^2$). 

One can still obtain this isomorphism using the following.

\begin{defn}
Let $\g$ be an \'etale groupoid. A regular Borel probability measure $\nu$ on $\g^{(0)}$ is called {\em $\g$-invariant} if for every bisection $U$ one has that $\nu(r(U)) = \nu(d(U))$. The affine space of all regular $\g$-invariant Borel probability measures is denoted $IM(\g)$.
\end{defn}

The following is a special case of \cite[Proposition 3.2]{KR06}.

\begin{theo}\label{trace-measure}(cf \cite[Proposition 3.2]{KR06})
Let $\g$ be a Hausdorff principal \'etale groupoid with compact unit space. Then
\[
T(C^*_{\text{red}}(\g)) \cong IM(\g)
\]
For $\tau\in T(C^*_{\text{red}}(\g))$ the image of $\tau$ under the above isomorphism is the regular Borel probability measure $\nu$ whose existence is guaranteed by the Riesz representation theorem applied to the positive linear functional on $C(\g^{(0)})$ given by restricting $\tau$.
\end{theo}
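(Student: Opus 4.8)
The plan is to exhibit explicit maps in both directions and verify that they are mutually inverse affine bijections, the only non‑formal ingredient being that every trace on $\Cr(\g)$ factors through the canonical conditional expectation $E$ of \eqref{conditionalexpectation}; this is exactly where the hypothesis that $\g$ is principal enters.

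\emph{From a trace to a measure.} Given $\tau\in T(\Cr(\g))$, recall that since $\g$ is Hausdorff with compact unit space, $\g^{(0)}$ is clopen in $\g$ and $C(\g^{(0)})=C_c(\g^{(0)})$ sits inside $\Cr(\g)$ as a unital C*-subalgebra. Hence $\tau|_{C(\g^{(0)})}$ is a state, and the Riesz representation theorem provides a unique regular Borel probability measure $\nu_\tau$ on $\g^{(0)}$ with $\tau(f)=\int f\,d\nu_\tau$ for $f\in C(\g^{(0)})$. To see $\nu_\tau\in IM(\g)$, fix a bisection $U$ and a compact $K\subseteq U$; for $f\in C_c(U)$ the elements $f^*f$ and $ff^*$ lie in $C(\g^{(0)})$, supported in $d(U)$ and $r(U)$ with $(f^*f)\circ(d|_U)=|f|^2=(ff^*)\circ(r|_U)$, so $\tau(f^*f)=\tau(ff^*)$ reads $\int(|f|^2\circ(d|_U)^{-1})\,d\nu_\tau=\int(|f|^2\circ(r|_U)^{-1})\,d\nu_\tau$. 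Taking $f_n\in C_c(U)$ with $f_n\downarrow 1_K$ and applying dominated convergence gives $\nu_\tau(d(K))=\nu_\tau(r(K))$, and inner regularity (using that $d|_U$ and $r|_U$ are homeomorphisms) upgrades this to $\nu_\tau(d(U))=\nu_\tau(r(U))$. (When $\g$ is ample one may simply take $f=1_U$ and use $1_U^*1_U=1_{d(U)}$, $1_U1_U^*=1_{r(U)}$.) The assignment $\tau\mapsto\nu_\tau$ is visibly affine, and it is precisely the correspondence named in the last sentence of the statement.

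\emph{From a measure to a trace.} Given $\nu\in IM(\g)$, define $\tau_\nu:=\nu\circ E$, i.e. $\tau_\nu(a)=\int_{\g^{(0)}}E(a)\,d\nu$. Since $E$ is a unital positive contraction and $\nu$ a probability measure, $\tau_\nu$ is a state, and it remains to check $\tau_\nu(ab)=\tau_\nu(ba)$. By continuity and bilinearity it suffices to take $a=f$ and $b=g$ in $C_c(\g)$ supported in bisections $U$ and $V$. Setting $W=U\cap V^{-1}$, a direct computation identifies $(fg)|_{\g^{(0)}}$ as a function supported on $r(W)$ and $(gf)|_{\g^{(0)}}$ as a function supported on $d(W)$, related by $(fg)|_{\g^{(0)}}\circ(r|_W)=(gf)|_{\g^{(0)}}\circ(d|_W)$; since $\g$-invariance of $\nu$ applied to all sub-bisections of $W$ forces $\nu|_{d(W)}$ and $\nu|_{r(W)}$ to have equal images under $(d|_W)^{-1}$ and $(r|_W)^{-1}$, integrating both sides against $\nu$ yields $\tau_\nu(fg)=\tau_\nu(gf)$. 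Again $\nu\mapsto\tau_\nu$ is affine.

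\emph{The two maps are mutually inverse.} Starting from $\nu$, forming $\tau_\nu=\nu\circ E$ and restricting to $C(\g^{(0)})$ returns $\nu$, because $E$ is the identity on $C(\g^{(0)})$ and the Riesz measure is unique. Conversely, starting from $\tau$ one must show $\tau=\tau\circ E=\tau_{\nu_\tau}$, i.e. that $\tau$ annihilates the ``off-diagonal'' part. Write an arbitrary $f\in C_c(\g)$ as a finite sum of functions each supported in a bisection that is either contained in, or disjoint from, the clopen set $\g^{(0)}$; it then suffices to show $\tau(f)=0$ when $\mathrm{supp}(f)\subseteq U$ for a bisection $U$ with $U\cap\g^{(0)}=\emptyset$. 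Principality forces $r(\gamma)\neq d(\gamma)$ for every $\gamma\in U$ (a non-unit with equal range and source would be nontrivial isotropy), so using a partition of unity subordinate to a cover of $\mathrm{supp}(f)$ by bisections with disjoint range and source sets we may assume the compact sets $r(\mathrm{supp}\,f)$ and $d(\mathrm{supp}\,f)$ are disjoint; choosing, by Urysohn, $h\in C(\g^{(0)})$ with $h\equiv 1$ on $r(\mathrm{supp}\,f)$ and $h\equiv 0$ on $d(\mathrm{supp}\,f)$, one checks $hf=f$ and $fh=0$, whence $\tau(f)=\tau(hf)=\tau(fh)=0$. Consequently $\tau=\tau\circ E=\tau_{\nu_\tau}$, the two constructions are inverse affine bijections, and the theorem follows. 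I expect this last factorization-through-$E$ step to be the main obstacle; the measure-theoretic bookkeeping with images of $\nu$ along bisections in the two constructions is routine but needs care with inner regularity and $\sigma$-compactness.
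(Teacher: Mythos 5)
Your argument is correct, and it is essentially the standard proof of this correspondence; the paper itself does not prove Theorem \ref{trace-measure} but delegates it to \cite[Proposition 3.2]{KR06} and \cite[Theorem 3.4.5]{PNotes}, whose arguments run along the same lines as yours (restriction/Riesz in one direction, $\nu\circ E$ in the other, with principality used exactly as you use it to show every trace factors through $E$ by killing functions supported on bisections off the diagonal). The measure-theoretic details you flag (inner regularity to pass from compact $K\subset U$ to $U$, and the identification of the pushforwards of $\nu$ along $d|_W$ and $r|_W$) all go through as you describe.
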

For a proof of Theorem \ref{trace-measure} in the above form, see \cite[Theorem 3.4.5]{PNotes}.

For us, the groupoid $\gt(S)$ satisfies all of the conditions in Theorem \ref{trace-measure}, except possibly for being principal. Also note that in the general case, $C^*_{\text{red}}(\gt)$ may not be isomorphic to $\CB(S)$. So if we restrict our attention to Boolean inverse monoids which have principal tight groupoids and for which $C^*_{\text{red}}(\gt(S))\cong \CB(S)$ (that is to say, Boolean inverse monoids for which $\gt(S)$ satisfies weak containment), we can obtain the desired isomorphism. While this may seem like a restrictive set of assumptions, they are all satisfied for the examples we consider here.

\begin{prop}\label{etanu}
Let $S$ be Boolean inverse monoid which satisfies condition \eqref{condH}, and suppose $\nu\in IM(\gt(S))$. Then the map $\eta_\nu: E(S)\to [0,\infty)$ defined by
\[
\eta_\nu(e) = \nu(D_e^\theta)
\]
is a normalized invariant mean on $S$. The map that sends $\nu\mapsto \eta_\nu$ is an affine isomorphism of $IM(\gt(S))$ and $M(S)$.
\end{prop}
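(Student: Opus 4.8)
The plan is to verify directly that $\eta_\nu$ is a normalized invariant mean, then exhibit an inverse to the map $\nu\mapsto\eta_\nu$ and check that both composites are the identity. First I would check that $\eta_\nu$ is a normalized invariant mean. That $\eta_\nu(1)=\nu(D_1^\theta)=\nu(\Et(S))=1$ is immediate since $\nu$ is a probability measure and $D_1^\theta$ is the whole unit space. Additivity on orthogonal idempotents follows because $ef=0$ forces $D_e^\theta\cap D_f^\theta=\emptyset$ and $D_{e\vee f}^\theta=D_e^\theta\cup D_f^\theta$ (filters are closed upward, so an ultrafilter contains $e\vee f$ iff it contains $e$ or $f$), so finite additivity of $\nu$ gives $\eta_\nu(e\vee f)=\eta_\nu(e)+\eta_\nu(f)$. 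For the trace condition $\eta_\nu(s^*s)=\eta_\nu(ss^*)$, the standard bisection $U_s:=\Theta(s,D_{s^*s}^\theta)$ satisfies $d(U_s)=D_{s^*s}^\theta$ and $r(U_s)=D_{ss^*}^\theta$, so $\g$-invariance of $\nu$ gives $\nu(D_{s^*s}^\theta)=\nu(D_{ss^*}^\theta)$, which is exactly $\eta_\nu(s^*s)=\eta_\nu(ss^*)$.

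Next I would construct the inverse map. Given $\mu\in M(S)$ we already have, via Proposition \ref{taumu} and Theorem \ref{BIMtight}, a trace $\tau_\mu$ on $\CB(S)=C^*(\gt(S))$; restricting $\tau_\mu$ to $C(\Et(S))=C(\gt(S)^{(0)})$ gives a positive linear functional (the restriction of a tracial state to the canonical commutative subalgebra), and the Riesz representation theorem yields a regular Borel probability measure $\nu_\mu$ on $\Et(S)$ with $\int 1_{D_e^\theta}\,d\nu_\mu=\tau_\mu(\delta_e)=\mu(e)$. One must check $\nu_\mu\in IM(\gt(S))$: since the clopen bisections $\Theta(s,D_e^\theta)$ generate the topology, it suffices to check $\nu_\mu(r(U))=\nu_\mu(d(U))$ on such $U$, where $d(\Theta(s,D_e^\theta))=D_e^\theta$ (with $e\leqslant s^*s$) and $r(\Theta(s,D_e^\theta))=\theta_s(D_e^\theta)=D_{ses^*}^\theta$; then $\nu_\mu(D_{ses^*}^\theta)=\mu(ses^*)$ and $\nu_\mu(D_e^\theta)=\mu(e)$, and these agree because $ses^*=(se)(se)^*$ while $e=e_{?}$... more directly, $se$ has $s^*s$-part $\geqslant e$ so $(se)^*(se)=es^*se=e$ and $(se)(se)^*=ses^*$, whence $\mu((se)^*(se))=\mu((se)(se)^*)$ by the mean's trace property. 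Since the clopen bisections are closed under finite disjoint unions and generate the Boolean algebra of all compact open bisections, a routine regularity/approximation argument upgrades this to all bisections.

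Finally I would check the two composites. The composite $\nu\mapsto\eta_\nu\mapsto\nu_{\eta_\nu}$ is the identity because $\nu_{\eta_\nu}$ and $\nu$ are regular Borel probability measures on $\Et(S)$ agreeing on every clopen set $D_e^\theta$, and these generate the Borel $\sigma$-algebra, so the measures coincide. The composite $\mu\mapsto\nu_\mu\mapsto\eta_{\nu_\mu}$ is the identity since $\eta_{\nu_\mu}(e)=\nu_\mu(D_e^\theta)=\mu(e)$ for all $e\in E(S)$. Affinity of $\nu\mapsto\eta_\nu$ is clear from linearity of $\nu\mapsto\nu(D_e^\theta)$, and affinity of the inverse follows formally, or directly from uniqueness in Riesz representation. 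The main obstacle I anticipate is the passage from clopen bisections to arbitrary open bisections in verifying $\g$-invariance of $\nu_\mu$: one needs that $\gt(S)$ is ample (which holds since $\Et(S)$ is totally disconnected, being the Stone space of the Boolean algebra $E(S)$) so that compact open bisections form a basis, together with inner regularity of $\nu_\mu$ to reduce an arbitrary bisection to compact open pieces and then to the generating sets $\Theta(s,D_e^\theta)$; none of this is deep but it is the step where care is required.
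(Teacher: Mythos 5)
Your proposal is correct and follows essentially the same route as the paper: verify the mean axioms via invariance of $\nu$ on the bisections $\Theta(s,D_{s^*s}^\theta)$ and additivity of the measure, then obtain the inverse by restricting $\tau_\mu$ from Proposition \ref{taumu} to $C(\Et(S))$ and applying Riesz, with bijectivity coming down to regular Borel measures being determined by their values on the clopen sets $D_e^\theta$. The extra care you take in checking $\gt(S)$-invariance of $\nu_\mu$ on generating clopen bisections and extending by regularity is a detail the paper leaves implicit, but it is not a different method.
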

\begin{proof}
That $\eta_\nu(s^*s) = \eta_\nu(ss^*)$ follows from invariance of $\nu$ applied to the bisection $\Theta(s, D_{s^*s})$, and that $\eta_\nu$ is additive over orthogonal joins follows from the fact that $\nu$ is a measure. This map is clearly affine. Suppose that $\eta_\nu = \eta_\kappa$ for $\nu,\kappa\in IM(\gt(S))$. Then $\nu,\kappa$ agree on all sets of the form $D_e^\theta$, and since these sets generate the topology on $\Et(S)$, $\nu$ and $\kappa$ agree on all open sets. Since they are regular Borel probability measures they must be equal, and so $\nu\mapsto\eta_\nu$ is injective.

To get surjectivity, let $\mu$ be an invariant mean, and let $\tau_\mu$ be as in Proposition \ref{taumu}. Then restricting $\tau_\mu$ to $C(\Et(S))$ and invoking the Riesz representation theorem gives us a regular invariant probability measure $\nu$ on $\Et(S)$, and we must have $\eta_\nu = \mu$.
\end{proof}
\begin{cor}
Let $\g$ be an ample Hausdorff groupoid. Then $IM(\g)\cong M(\g^a)$.
\end{cor}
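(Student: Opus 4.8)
The plan is to reduce the corollary to Proposition~\ref{etanu} via the noncommutative Stone duality isomorphism~\eqref{NCstone}. Since $M(\g^a)$ involves \emph{normalized} invariant means, I will take as implicit that $\g^{(0)}$ is compact, so that (as recorded in Section~2) $\g^a$ is a genuine Boolean inverse monoid.

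First I would set $S:=\g^a$ and invoke~\eqref{NCstone}, which gives a topological groupoid isomorphism $\varphi:\gt(S)\xrightarrow{\ \cong\ }\g$. In particular $\gt(S)$ is Hausdorff, so by \cite[Theorem 3.16]{EP16} — equivalently, the translation of that result recorded just before condition~\eqref{condH} — the Boolean inverse monoid $S=\g^a$ satisfies condition~\eqref{condH}. Hence Proposition~\ref{etanu} applies to $S$ and yields an affine isomorphism $IM(\gt(\g^a))\cong M(\g^a)$ via $\nu\mapsto\eta_\nu$.

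It then remains to transport $IM(\gt(\g^a))$ over to $IM(\g)$ along $\varphi$. The isomorphism $\varphi$ restricts to a homeomorphism $\varphi^{(0)}:\gt(\g^a)^{(0)}\to\g^{(0)}$ of unit spaces, intertwines the range and source maps, and carries bisections to bisections; consequently pushforward of measures, $\nu\mapsto\varphi^{(0)}_*\nu$, sends regular Borel probability measures to regular Borel probability measures and preserves the invariance condition $\nu(r(U))=\nu(d(U))$, so it is an affine isomorphism $IM(\gt(\g^a))\cong IM(\g)$. Composing the two isomorphisms gives $IM(\g)\cong M(\g^a)$.

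The only point requiring any care — and the closest thing here to an obstacle — is checking that~\eqref{NCstone} really does respect all the structure used to define $IM$: that it is a homeomorphism on unit spaces and matches up $r$- and $d$-images of bisections. But since~\eqref{NCstone} is an isomorphism of topological groupoids, each of these properties is automatic, and the rest of the argument is bookkeeping.
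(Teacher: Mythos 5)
Your proof is correct and follows the same route the paper intends: the corollary is exactly Proposition~\ref{etanu} applied to $S=\g^a$, combined with the identification $\gt(\g^a)\cong\g$ from~\eqref{NCstone} (which gives both the Hausdorffness needed for condition~\eqref{condH} and the transport of invariant measures). Your added care about compactness of $\g^{(0)}$ and about~\eqref{NCstone} respecting the groupoid structure is sound bookkeeping, not a deviation.
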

So the invariant means on the ample semigroup of an ample Hausdorff groupoid are in one-to-one correspondence with the $\g$-invariant measures.
\begin{theo}\label{maintheo}
Let $S$ be Boolean inverse monoid which satisfies condition \eqref{condH}. Suppose that $\gt(S)$ is principal, and that $C^*_{\text{red}}(\gt(S)) \cong \CB(S)$. Then
\[
T(\CB(S))\cong M(S)
\]
via the map which sends $\tau$ to $\mu_\tau$ as in Proposition \ref{mutau}. In addition, both are isomorphic to $IM(\gt(S))$.
\end{theo}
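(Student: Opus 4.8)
The plan is to assemble the isomorphism entirely from results already established, so that the theorem becomes a short diagram-chase. First I would observe that condition \eqref{condH} is exactly the assertion that $\gt(S)$ is Hausdorff, and that since $E(S)$ is a Boolean algebra with top element $1$, its Stone space $\Et(S) = \Eu(S)$ is compact; hence $\gt(S)$ is a Hausdorff étale groupoid with compact unit space, and by hypothesis it is principal. Thus Theorem \ref{trace-measure} applies with $\g = \gt(S)$ and supplies an affine isomorphism $T(\Cr(\gt(S))) \cong IM(\gt(S))$, implemented by restricting a trace to $C(\Et(S))$ and invoking the Riesz representation theorem. Using the standing hypothesis $\Cr(\gt(S)) \cong \CB(S)$ (that is, weak containment for $\gt(S)$) together with Theorem \ref{BIMtight}, this transports to an affine isomorphism $T(\CB(S)) \cong IM(\gt(S))$. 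Composing with the affine isomorphism $\eta\colon IM(\gt(S)) \xrightarrow{\sim} M(S)$ of Proposition \ref{etanu} yields $T(\CB(S)) \cong M(S)$, and the last sentence of the statement is immediate.

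Then I would check that this composite is the map $\tau \mapsto \mu_\tau$ of Proposition \ref{mutau}. Under the identification $\CB(S) \cong \Cr(\gt(S))$ the generator $\delta_e = \pi_u(e)$ corresponds to the characteristic function $1_{D_e^\theta}$ of the clopen set $D_e^\theta \subset \Et(S)$, so for $\tau \in T(\CB(S))$ with associated measure $\nu$ (the restriction of $\tau$ to $C(\Et(S))$ together with Riesz) we have
\[
\mu_\tau(e) = \tau(\delta_e) = \tau(1_{D_e^\theta}) = \int_{\Et(S)} 1_{D_e^\theta}\, d\nu = \nu(D_e^\theta) = \eta_\nu(e)
\]
for every $e \in E(S)$. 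Hence $\tau \mapsto \mu_\tau$ equals the composition of the two isomorphisms above, and is therefore itself an affine isomorphism $T(\CB(S)) \to M(S)$. Its inverse must be $\mu \mapsto \tau_\mu$; this is also visible directly, since the earlier identity $\mu_{\tau_\mu} = \mu$ shows $\tau \mapsto \mu_\tau$ is surjective, while injectivity follows from Theorem \ref{trace-measure} because a trace on $\Cr(\gt(S))$ is determined by its restriction to $C(\Et(S))$.

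The main (and essentially only) obstacle is the bookkeeping in the second paragraph: verifying that $\Et(S)$ is compact, that the isomorphism of Theorem \ref{trace-measure} is precisely the restriction-plus-Riesz map stated in the excerpt, and that $\delta_e$ matches $1_{D_e^\theta}$ under $\CB(S) \cong \Cr(\gt(S))$ (the latter already implicit in the proof of Proposition \ref{taumu}, where $E(\delta_s) = \delta_{e_s}$ is used). None of this is deep, and no new estimate or construction is needed beyond Propositions \ref{taumu}, \ref{mutau}, \ref{etanu} and Theorem \ref{trace-measure}; so I expect the proof to be a genuine corollary, the content being the confirmation that all the maps in sight are mutually inverse affine isomorphisms.
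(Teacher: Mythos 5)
Your proposal is correct and follows exactly the paper's route: the paper's proof of Theorem \ref{maintheo} is simply the observation that it "follows from Theorem \ref{trace-measure} and Proposition \ref{etanu}," and your argument is that same composition of isomorphisms, with the additional (welcome) verification that the composite map is indeed $\tau\mapsto\mu_\tau$.
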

\begin{proof}
This follows from Theorem \ref{trace-measure} and Proposition \ref{etanu}.
\end{proof}



There are many results in the literature concerning traces which now apply to our situation.  

\begin{cor}
Let $S$ be Boolean inverse monoid which satisfies condition \eqref{condH}. If $S$ admits a faithful invariant mean, then $C^*_{\text{red}}(\gt(S))$ is stably finite. If in addition $\gt(S)$ satisfies weak containment, $\CB(S)$ is stably finite.
\end{cor}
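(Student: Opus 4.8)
The plan is to deduce the statement directly from Proposition \ref{taumu} together with the classical fact that a unital C*-algebra carrying a faithful tracial state is stably finite.

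First I would reduce to a normalized mean. If $\mu$ is a faithful invariant mean then $\mu(1) > 0$ (since $1 \neq 0$), so $\mu(1)^{-1}\mu \in M(S)$ is again faithful; hence we may assume $\mu \in M(S)$. Because $S$ is a monoid, $E(S)$ has a largest element and $\Et(S) = \gt(S)^{(0)}$ is compact, so $\Cr(\gt(S))$ is unital. Proposition \ref{taumu} now supplies a trace $\tau_\mu$ on $\CB(S)$ whose restriction to $\Cr(\gt(S))$ is a faithful trace; since $e_1 = 1$ and $\delta_1$ is the unit, $\tau_\mu(1) = \mu(1) = 1$, so that restriction is a faithful tracial state on $\Cr(\gt(S))$.

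Next I would invoke stable finiteness. For each $n$ the functional $\tfrac1n\,\tau_\mu \otimes \operatorname{Tr}_n$ is a faithful tracial state on $M_n(\Cr(\gt(S)))$; and a unital C*-algebra $A$ with a faithful tracial state $\tau$ is finite, because $v^*v = 1$ forces $\tau(1 - vv^*) = \tau(1) - \tau(v^*v) = 0$ with $1 - vv^* \geq 0$, whence $1 - vv^* = 0$. Thus $M_n(\Cr(\gt(S)))$ is finite for all $n$, i.e. $\Cr(\gt(S))$ is stably finite, which is the first assertion.

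For the second assertion, assume $\gt(S)$ satisfies weak containment, so that $\Lambda \colon C^*(\gt(S)) \to \Cr(\gt(S))$ is an isomorphism; combined with $\CB(S) \cong C^*(\gt(S))$ from Theorem \ref{BIMtight} this gives $\CB(S) \cong \Cr(\gt(S))$, which is stably finite. I do not expect a genuine obstacle here — all the real content is already in Proposition \ref{taumu} — the only points needing a line of care are the passage to a normalized faithful mean and the unitality of $\Cr(\gt(S))$.
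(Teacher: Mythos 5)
Your proposal is correct and follows essentially the same route as the paper: normalize the faithful mean, apply Proposition \ref{taumu} to obtain a faithful tracial state on $\Cr(\gt(S))$, and conclude stable finiteness by the standard fact (which the paper simply cites as an exercise and you prove explicitly), with weak containment handling the second assertion. No gaps.
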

\begin{proof}
If $\mu$ is a faithful invariant mean, then after normalizing one obtains a faithful trace on $\Cr(\gt(S))$ by Proposition \ref{taumu}. Now the result is standard, see for example \cite[Exercise 5.2]{LLRBook}. 
\end{proof}

\begin{cor}
Let $S$ be Boolean inverse monoid which satisfies condition \eqref{condH}. If $\CB(S)$ is stably finite and exact, then $S$ has an invariant mean.
\end{cor}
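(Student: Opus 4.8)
The plan is to produce a tracial state on $\CB(S)$ and then transport it to an invariant mean via Proposition \ref{mutau}. Observe that Proposition \ref{mutau} requires no hypothesis on $S$ beyond its being a Boolean inverse monoid, so condition \eqref{condH} will not actually enter the argument; it is present in the statement only to keep the corollary in line with its neighbours (one could drop it).

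First I would record that $\CB(S)$ is a nonzero unital C*-algebra. It is unital by construction, and it is nonzero because $1\neq 0$ in $S$, so $\Et(S)$ is a nonempty compact space and, via Theorem \ref{BIMtight}, $\CB(S)\cong C^*(\gt(S))$ contains a copy of $C(\Et(S))\neq 0$. By hypothesis $\CB(S)$ is moreover stably finite and exact.

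Next I would invoke two imported results. By the theorem of Blackadar and Handelman, every unital stably finite C*-algebra carries a nonzero bounded quasitrace, which after normalization is a quasitracial state; and by Haagerup's theorem that bounded quasitraces on exact unital C*-algebras are automatically traces, this quasitracial state is a genuine tracial state. Hence $T(\CB(S))\neq\emptyset$. Fixing $\tau\in T(\CB(S))$ and applying Proposition \ref{mutau} then yields a normalized invariant mean $\mu_\tau\colon E(S)\to[0,\infty)$, $\mu_\tau(e)=\tau(\delta_e)$, on $S$, which is what we want.

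The argument has no genuinely hard step of its own: all the weight is carried by the two external inputs --- the existence of a bounded quasitrace on a unital stably finite C*-algebra, and the identification of quasitraces with traces in the exact case. The only points worth double-checking are that ``stably finite'' is read in the usual sense (every matrix amplification $M_n(\CB(S))$ is finite), which is exactly the hypothesis the Blackadar--Handelman argument needs, and that one is happy citing Haagerup's (at the time unpublished) result on quasitraces; if one wants to avoid quasitraces altogether one could instead impose the stronger hypothesis that $\CB(S)$ is exact and quasidiagonal and quote Voiculescu's theorem to obtain the tracial state, but exactness plus stable finiteness is the cleaner assumption.
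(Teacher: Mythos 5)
Your argument is correct and is essentially the paper's proof: the paper simply cites Haagerup's result together with Proposition \ref{mutau}, and you have filled in the standard intermediate step (Blackadar--Handelman giving a quasitracial state on a stably finite unital C*-algebra, Haagerup upgrading it to a trace in the exact case). Your observation that condition \eqref{condH} is not actually used, since Proposition \ref{mutau} does not require it, is also accurate.
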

\begin{proof}
This is a consequence of the celebrated result of Haagerup \cite{Ha14} when applied to Proposition \ref{mutau}
\end{proof}
For the undefined terms above, we direct the interested reader to \cite{BO08}. We also note that exactness of $\CB(S)$ has recently been considered in \cite{Li16} and \cite{AD16a}.
\section{Examples}\label{examplessection}

\subsection{AF inverse monoids}

This is a class of Boolean inverse monoids introduced in \cite{LS14} motivated by the construction of AF C*-algebras from Bratteli diagrams. 

A {\em Bratteli diagram} is an infinite directed graph $B = (V, E, r, s)$ such that 

\begin{enumerate}
\item $V$ can be written as a disjoint union of finite sets $V = \cup_{n\geq 0}V_n$
\item $V_0$ consists of one element $v_0$, called the {\em root},
\item for all edges $e\in E$, $s(e)\in V_i$ implies that $r(e)\in V_{i+1}$ for all $i \geq 0$, and
\item for all $i\geq 1$ and all $v\in V_i$, both $r^{-1}(v)$ and $s^{-1}(v)$ are finite and nonempty.
\end{enumerate}
We also denote $s^{-1}(V_i) := E_i$, so that $E = \cup_{n\geq 0}E_n$. Let $E^*$ be the set of all finite paths in $B$, including the vertices (treated as paths of length zero). For $v, w\in V\cup E$, let $vE^*$ denote all the paths starting with $v$, let $E^*w$ be all the paths ending with $w$, and let $vE^*w$ be all the paths starting with $v$ and ending with $w$.

Given a Bratteli diagram $B = (V, E, r, s)$ we construct a C*-algebra as follows. We let 
\[
A_0 = \CC
\]
\[
A_1 = \bigoplus_{v\in V_1}\M_{|r^{-1}(v)|},
\]
and define $k_1(v) = |r^{-1}(v)|$ for all $v\in V_1$. For an integer $i>1$ and $v\in V_i$, let 
\begin{equation}\label{ki}
k_i(v) = \sum_{\gamma\in r^{-1}(v)}k_{i-1}(s(\gamma)).
\end{equation} 
Define
\[
A_{i} = \bigoplus_{v\in V_i}\M_{k_{i}(v)}
\]
Now for all $i\geq 0$, one can embed $A_i\hookrightarrow A_{i+1}$ by viewing, for each $v\in V_{i+1}$
\[
\bigoplus_{\gamma\in r^{-1}(v)}\M_{k_{i}(s(\gamma))} \subset \M_{k_{i+1}(v)}
\]
where the algebras in the direct sum are orthogonal summands along the diagonal in $\M_{k_{i+1}(v)}$. So $A_0\hookrightarrow A_1\hookrightarrow A_1\hookrightarrow \cdots$ can be viewed as an increasing union of finite dimensional C*-algebra, all of which can be realized as subalgebras of $\mathcal{B}(\mathcal{H})$ for the same $\mathcal{H}$, and so we can form the norm closure of the union
\[
A_B:= \overline{\bigcup_{n\geq 0}A_n}.
\]
This C*-algebra is what is known as an {\em AF algebra}, and every unital AF algebra arises this way from some Bratteli diagram.

The AF algebra $A_B$ can always be described as the C*-algebra of a principal groupoid derived from $B$, see \cite{R80} and \cite{ER06}. We reproduce this construction here. Let $X_B$ denote the set of all infinite paths in $B$ which start at the root. When given the product topology from the discrete topologies on the $E_n$, this is a compact Hausdorff totally disconnected space. For $\alpha\in v_0E^*$, we let $C(\alpha) = \{x\in X_B\mid x_i = \alpha_i \text{ for all }i = 0, \dots, |\alpha|-1\}$. Sets of this form are clopen and form a basis for the topology on $X_B$. For $n\in \NN$, let
\[
\mathcal{R}^{(n)}_B = \{(x, y)\in X\times X\mid x_i = y_i \text{ for all } i\geq n+1\}
\]
so a pair of infinite paths $(x,y)$ is in $\mathcal{R}^{(n)}_B$ if and only if $x$ and $y$ agree after the vertices on level $n$. Clearly, $\mathcal{R}^{(n)}_B \subset \mathcal{R}^{(n+1)}_B$, and so we can form their union
\[
\mathcal{R}_B = \bigcup_{n\in \NN}\mathcal{R}^{(n)}_B.
\]
This is an equivalence relation, known as {\em tail equivalence} on $X_B$. For $v\in V\setminus\{v_0\}$ and $\alpha, \beta\in v_0E^*v$, define
\[
C(\alpha, \beta) = \{(x, y)\in \mathcal{R}_B\mid x\in C(\alpha), y\in C(\beta)\}
\]
sets of this type form a basis for a topology on $\mathcal{R}_B$, and with this topology $\mathcal{R}_B$ is a principal Hausdorff \'etale groupoid with unit space identified with $X_B$, and
\[
C^*(\mathcal{R}_B)\cong C_{\text{red}}^*(\mathcal{R}_B) \cong A_B.
\]

In \cite{LS14}, a Boolean inverse monoid is constructed from a Bratteli diagram, mirroring the above construction. We will present this Boolean inverse monoid in a slightly different way which may be enlightening. Let $B = (V, E, r, s)$ be a Bratteli diagram. Let $S_0$ be the Boolean inverse monoid (in fact, Boolean algebra) $\{0,1\}$. For each $i \geq 1$, let
\[
S_i = \bigoplus_{v\in V_i}\I(v_0E^*v)
\]
where as in Section \ref{ISGsubsection}, $\I(X)$ denotes the set of partially defined bijections on $X$.  

If $v\in V_{i+1}$ and $\gamma \in r^{-1}(v)$ then one can view $\I(v_0E^*\gamma)$ as a subset of $\I(v_0E^*v)$, and if $\eta\in r^{-1}(v)$ with $\gamma\neq \eta$, $\I(v_0E^*\gamma)$ and $\I(v_0E^*\eta)$ are orthogonal. Furthermore, $\I(v_0E^*\gamma)$ can be identified with $\I(v_0E^*s(\gamma))$ Hence the direct sum over $r^{-1}(v)$ can be embedded into $\I(v_0E^*v)$:
\begin{equation}\label{subISG}
\bigoplus_{\gamma\in r^{-1}(v)}\I(v_0E^*s(\gamma)) \hookrightarrow \I(v_0E^*v).
\end{equation}
This allows us to embed $S_i\hookrightarrow S_{i+1}$
\[
\bigoplus_{v\in V_i}\I(v_0E^*v) \hookrightarrow \bigoplus_{w\in V_{i+1}}\I(v_0E^*w)
\]
where an element $\phi$ in a summand $\I(v_0E^*v)$ gets sent to $|s^{-1}(v)|$ summands on the right, one for each $\gamma\in s^{-1}(v)$: $\phi$ will be sent to the summand inside $\I(v_0E^*s(\gamma))$ corresponding to $v$ in left hand side of the embedding from \eqref{subISG}. We then define
\[
I(B) = \lim_{\to}(S_{i}\hookrightarrow S_{i+1})
\]
This is a Boolean inverse monoid \cite[Lemma 3.13]{LS14}. As a set $I(B)$ is the union of all the $S_i$, viewed as an increasing union via the identifications above. In \cite[Remark 6.5]{LS14}, it is stated that the groupoid one obtains from $I(B)$ (i.e., $\gt(I(B))$) is exactly tail equivalence. We provide the details of that informal discussion here. 

We will describe the ultrafilters in $E(I(B))$, a Boolean algebra. For $v\in V_i$ and a path $\alpha \in v_0E^*v$, let $e_\alpha =$ Id$_{\{\alpha\}}\in \I(v_0E^*v)$. As $v$ ranges over all of $V_i$ and $\alpha$ ranges over all of $v_0E^*v$, these idempotents form a orthogonal decomposition of the identity of $I(B)$. Hence, given an ultrafilter $\xi$ and $i> 0$ there exists one and only one path, say $\alpha^{(i)}_\xi$ ending at level $i$ with $e_{\alpha^{(i)}_\xi}\in \xi$. Furthermore, if $j> i$, we must have that $\alpha^{(i)}_\xi$ is a prefix of $\alpha^{(j)}_\xi$, because products in an ultrafilter cannot be zero. So for $x\in X_B$, if we define
\[
\xi_x = \{e_{\alpha}\mid \alpha\text{ is a prefix of }x\}
\]
then we have that 
\[
\Eu(I(B)) = \{\xi_x\mid x\in X_B\}
\]
By \cite[Proposition 2.6]{EP16}, the set
\[
\{U(\{e_\alpha\}, \emptyset)\mid \alpha\text{ is a prefix of }x\}
\]
is a neighbourhood basis for $\xi_x$. The map $\lambda:X_B\to \Eu(I(B))$ given by $\lambda(x) = \xi_x$ is a bijection, and since $U(\{e_\alpha\}, \emptyset) = \lambda(C(\alpha))$, it is a homeomorphism. If $\phi\in S_i$ such that $\phi^*\phi\in \xi_x$, then we must have that one component of $\phi$ is in $\I(v_0E^*r(x_i))$, and we must have that 
\begin{equation}\label{bratteliaction}
\theta_\phi(\xi_x) = \xi_{\phi(x_0x_1\dots x_i)x_{i+1}x_{i+2}\dots} 
\end{equation}

Finally, we claim that $\mathcal{R}_B$ is isomorphic to $\gt(I(B))$. We define a map
\[
\Phi: \gt(I(B)) \to \mathcal{R}_B
\]
\[
\Phi([\phi, \xi_x]) \mapsto (\phi(x_0x_1\dots x_i)x_{i+1}x_{i+2}\dots, x)
\]
where $\phi$ and $x$ are as in \eqref{bratteliaction}. If $\Phi([\phi, \xi_x]) = \Phi([\psi, \xi_y])$, then clearly we must have $\xi_x = \xi_y$. We must also have that $\phi,\psi\in S_i$, and $\phi e_{x_0x_1\dots x_i} = \psi e_{x_0x_1\dots x_i}$, hence $[\phi, \xi_x] = [\psi, \xi_y]$. It is straightforward to verify that $\Phi$ is surjective and bicontinuous, and so $\mathcal{R}_B\cong \gt(I(B))$. Since they are both \'etale, their C*-algebras must be isomorphic. Hence with the above discussion, we have proven the following.
\begin{theo}\label{AFBIM}
Let $B$ be a Bratteli diagram. Then 
\[
\CB(I(B))\cong A_B.
\]
Furthermore, every unital AF algebra is isomorphic to the universal C*-algebra of a Boolean inverse monoid of the form $I(B)$ for some $B$.
\end{theo}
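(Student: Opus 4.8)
The plan is to chain together the facts already assembled in the paragraphs above: Theorem~\ref{BIMtight}, the groupoid isomorphism $\gt(I(B))\cong\mathcal{R}_B$ built via the map $\Phi$, and the classical description $C^*(\mathcal{R}_B)\cong A_B$.

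First I would record that $I(B)$ is a Boolean inverse monoid (this is \cite[Lemma 3.13]{LS14}), so Theorem~\ref{BIMtight} applies and gives $\CB(I(B))\cong C^*(\gt(I(B)))$. Next I would invoke the map $\Phi$ constructed in the discussion preceding the statement: it is a well-defined bijective functor from $\gt(I(B))$ to $\mathcal{R}_B$, and using the homeomorphism $\lambda\colon X_B\to\Eu(I(B))$ together with the formula \eqref{bratteliaction} for the standard action, one checks that $\Phi$ and $\Phi^{-1}$ are continuous on the generating bisections; hence $\Phi$ is an isomorphism of \'etale groupoids. An isomorphism of \'etale groupoids carries $C_c$ onto $C_c$ compatibly with convolution and involution and identifies the universal C*-norms, so $C^*(\gt(I(B)))\cong C^*(\mathcal{R}_B)$. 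Combining this with $C^*(\mathcal{R}_B)\cong A_B$ (recalled above from \cite{R80} and \cite{ER06}) yields $\CB(I(B))\cong A_B$.

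For the second assertion I would use the standard fact that every unital AF algebra $A$ arises as $A_B$ for some Bratteli diagram $B$: writing $A=\overline{\bigcup_n A_n}$ as an increasing union of finite-dimensional C*-algebras, the inclusions $A_n\hookrightarrow A_{n+1}$ are determined up to isomorphism by the multiplicities of simple summands, and these multiplicities are precisely the edge data of a Bratteli diagram $B$ with $A_B\cong A$. Then the first part gives $A\cong A_B\cong\CB(I(B))$, exhibiting $A$ as the universal C*-algebra of a Boolean inverse monoid of the stated form.

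The only step needing genuine care — and it has effectively been carried out already before the statement — is the verification that $\Phi$ is a bicontinuous groupoid isomorphism, in particular that it is injective: this reduces to the observation that if $\phi,\psi$ lie in some $S_i$ and agree on the idempotent $e_{x_0x_1\cdots x_i}$, then $[\phi,\xi_x]=[\psi,\xi_x]$ as germs. Everything else is bookkeeping, so I do not expect a serious obstacle.
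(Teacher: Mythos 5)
Your proposal is correct and follows essentially the same route as the paper, whose proof of Theorem~\ref{AFBIM} is precisely the preceding discussion: apply Theorem~\ref{BIMtight} to get $\CB(I(B))\cong C^*(\gt(I(B)))$, use the bicontinuous bijection $\Phi$ (with the same injectivity argument via germs agreeing on $e_{x_0\cdots x_i}$) to identify $\gt(I(B))$ with $\mathcal{R}_B$, and invoke $C^*(\mathcal{R}_B)\cong A_B$ together with the standard fact that every unital AF algebra is $A_B$ for some Bratteli diagram $B$.
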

Recall that a compact convex metrizable subset $X$ of a locally convex space is a {\em  Choquet simplex} if and only if for each $x\in X$ there exists a unique measure $\nu$ concentrated on the extreme points of $X$ for which $x$ is the center of gravity of $X$ for $\nu$ \cite{Ph01}. Now we can use the following seminal result of Blackadar to make a statement about the set of normalized invariant means for AF inverse monoids.

\begin{theo}({\bf Blackadar}, see \cite[Theorem 3.10]{Bl80}) Let $\Delta$ be any metrizable Choquet simplex. Then there exists a unital simple AF algebra $A$ such that $T(A)$ is affinely isomorphic to $\Delta$.
\end{theo}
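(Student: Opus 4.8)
The plan is to prove this via the classification of AF algebras by dimension groups together with the identification of the tracial state space with the state space of ordered $K_0$. Recall that by the Elliott--Effros--Handelman--Shen theory, unital AF algebras correspond bijectively (via $A \mapsto (K_0(A), K_0(A)_+, [1_A])$) to countable dimension groups with distinguished order unit, that simple AF algebras correspond to simple dimension groups, and that for such $A$ the tracial state space $T(A)$ is affinely homeomorphic to the state space $S(K_0(A),[1_A])$ of positive normalized homomorphisms into $\RR$. Thus the theorem reduces to a purely order-theoretic statement: every metrizable Choquet simplex $\Delta$ is affinely homeomorphic to the state space of some countable \emph{simple} dimension group with order unit.

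First I would present $\Delta$ as an inverse limit of finite-dimensional simplices. By the Lazar--Lindenstrauss theorem, every metrizable Choquet simplex is affinely homeomorphic to $\varprojlim(\Delta_n, \psi_n)$ for finite-dimensional simplices $\Delta_n$ and continuous affine surjections $\psi_n \colon \Delta_{n+1} \to \Delta_n$. Each $\Delta_n$ with $m_n$ extreme points is exactly the state space $S(\ZZ^{m_n}, u_n)$ of the standardly ordered group $\ZZ^{m_n}$ equipped with a suitable strictly positive order unit $u_n \in \ZZ^{m_n}_+$, a state being the map $x \mapsto \sum_i \lambda_i x_i$ for $\lambda \in \RR^{m_n}_+$ with $\sum_i \lambda_i (u_n)_i = 1$. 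Under this identification, the dual of a nonnegative integer matrix $M_n$ satisfying $M_n u_n = u_{n+1}$ induces, by precomposition, a unital affine map $S(\ZZ^{m_{n+1}}, u_{n+1}) \to S(\ZZ^{m_n}, u_n)$; so if I can realize each $\psi_n$ (up to the inverse limit) as such a dual map, then $G = \varinjlim(\ZZ^{m_n}, M_n)$ is a dimension group with $S(G, [u_0]) \cong \varprojlim \Delta_n \cong \Delta$, and its associated AF algebra $A_G$ has $T(A_G) \cong \Delta$.

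The substantive work is therefore to approximate the given affine surjections $\psi_n$---whose duals are real column-stochastic matrices---by maps dual to nonnegative integer matrices, \emph{without} changing the inverse limit up to affine homeomorphism, and to do so while forcing simplicity of $G$. For the integrality I would approximate each stochastic matrix by a rational one, clear denominators, and correct the order units, absorbing the resulting perturbations through a telescoping/intertwining argument: passing to a subsequence and composing connecting maps, one shows that sufficiently good approximations leave the inverse limit of simplices unchanged up to affine homeomorphism. For simplicity, recall that $G$ is a simple dimension group precisely when every nonzero positive element is an order unit, which for a limit $\varinjlim(\ZZ^{m_n}, M_n)$ amounts to the product matrices $M_{N}\cdots M_n$ eventually having all entries strictly positive; surjectivity of the $\psi_n$ supplies reachability, and by further composing consecutive levels one upgrades this to strict positivity of the products.

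The main obstacle is exactly this last reconciliation: the connecting matrices must be nonnegative integers (to define a dimension group at all), their products must become strictly positive (for simplicity of $G$, hence of $A$), and yet the associated inverse system of state-space simplices must remain affinely homeomorphic to the prescribed $\Delta$ (to pin down $T(A)$). Controlling integrality, strict positivity, and the affine-homeomorphism type of the inverse limit simultaneously---rather than any one of them in isolation---is the crux of Blackadar's argument, and the quantitative approximation needed to keep all three properties under control at once is where the real effort lies.
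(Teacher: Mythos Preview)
The paper does not prove this theorem; it simply quotes it as a known result of Blackadar \cite[Theorem 3.10]{Bl80} and immediately uses it to deduce the subsequent corollary about invariant means on AF inverse monoids. There is therefore no proof in the paper to compare against.

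That said, your outline is the standard strategy behind Blackadar's result: reduce via Elliott's classification and the identification $T(A)\cong S(K_0(A),[1_A])$ to constructing a countable simple dimension group with prescribed state space, then invoke the Lazar--Lindenstrauss inverse-limit representation of metrizable Choquet simplices and realize the connecting maps by duals of nonnegative integer matrices whose products are eventually strictly positive. You correctly identify the crux---simultaneously controlling integrality of the connecting matrices, strict positivity of their products, and the affine-homeomorphism type of the resulting inverse limit of simplices---and you flag it as the place where genuine work is required rather than claiming it is routine. As a high-level sketch of Blackadar's argument this is accurate; a complete proof would have to supply the quantitative approximation and intertwining details you allude to, but for the purposes of this paper the theorem is treated as a black-box citation and no such details are expected.
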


\begin{cor}
Let $\Delta$ be any metrizable Choquet simplex. Then there exists an AF inverse monoid $S$ such that $M(S)$ is affinely isomorphic to $\Delta$. 
\end{cor}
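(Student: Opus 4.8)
The plan is to combine Blackadar's theorem with Theorem \ref{AFBIM} and Theorem \ref{maintheo}. Given a metrizable Choquet simplex $\Delta$, Blackadar's theorem produces a unital simple AF algebra $A$ with $T(A)$ affinely isomorphic to $\Delta$. Since every unital AF algebra arises from a Bratteli diagram, we may write $A \cong A_B$ for some Bratteli diagram $B$, and then by Theorem \ref{AFBIM} we have $\CB(I(B)) \cong A_B \cong A$. The candidate Boolean inverse monoid is $S = I(B)$, and what remains is to verify that $M(S)$ is affinely isomorphic to $T(\CB(S)) = T(A)$.

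To get this last isomorphism I would invoke Theorem \ref{maintheo}, which requires three things: that $S = I(B)$ satisfy condition \eqref{condH}, that $\gt(S)$ be principal, and that $C^*_{\text{red}}(\gt(S)) \cong \CB(S)$. All three follow from the identification $\gt(I(B)) \cong \mathcal{R}_B$ established in the discussion preceding Theorem \ref{AFBIM}: the groupoid $\mathcal{R}_B$ (tail equivalence) is principal, it is Hausdorff — which by \cite[Theorem 3.16]{EP16} is exactly condition \eqref{condH} — and it is an AF groupoid, hence amenable, so that $C^*(\mathcal{R}_B) \cong C^*_{\text{red}}(\mathcal{R}_B)$ and weak containment holds; combined with Theorem \ref{BIMtight} this gives $\CB(I(B)) \cong C^*(\gt(I(B))) \cong C^*_{\text{red}}(\gt(I(B)))$. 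Theorem \ref{maintheo} then yields $T(\CB(S)) \cong M(S)$ as affine spaces. Chaining the affine isomorphisms $M(S) \cong T(\CB(S)) \cong T(A) \cong \Delta$ finishes the proof.

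I do not anticipate a serious obstacle here, since this is essentially a bookkeeping argument assembling results already in place. The only point requiring a little care is confirming that $I(B)$ meets all the hypotheses of Theorem \ref{maintheo}; this is where the explicit identification $\gt(I(B)) \cong \mathcal{R}_B$ does the work, and one should note explicitly that AF groupoids are amenable (they are increasing unions of compact open principal subgroupoids) so that weak containment — the one hypothesis of Theorem \ref{maintheo} not about principality or \eqref{condH} — is satisfied. One might also remark that $A$ being simple is not needed for the conclusion, though it is harmless to keep it.

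\begin{proof}
By Blackadar's theorem there is a unital simple AF algebra $A$ with $T(A)$ affinely isomorphic to $\Delta$. Since every unital AF algebra is of the form $A_B$ for some Bratteli diagram $B$, write $A \cong A_B$, and set $S = I(B)$. By Theorem \ref{AFBIM}, $\CB(S) \cong A_B \cong A$, so $T(\CB(S))$ is affinely isomorphic to $\Delta$.

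It remains to show $M(S) \cong T(\CB(S))$ affinely. Recall from the discussion preceding Theorem \ref{AFBIM} that $\gt(S) \cong \mathcal{R}_B$, the tail equivalence groupoid, which is principal and Hausdorff; by \cite[Theorem 3.16]{EP16}, Hausdorffness of $\gt(S)$ is precisely condition \eqref{condH}, so $S$ satisfies \eqref{condH}. Moreover $\mathcal{R}_B$ is an AF groupoid, hence amenable, so it satisfies weak containment and $C^*_{\text{red}}(\gt(S)) \cong C^*(\gt(S)) \cong \CB(S)$ by Theorem \ref{BIMtight}. Thus the hypotheses of Theorem \ref{maintheo} are met, giving an affine isomorphism $T(\CB(S)) \cong M(S)$. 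Composing, $M(S)$ is affinely isomorphic to $\Delta$.
\end{proof}
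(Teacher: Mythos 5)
Your proof is correct and follows essentially the same route as the paper: apply Blackadar to get $A = A_B$, take $S = I(B)$, and use the identification $\gt(I(B)) \cong \mathcal{R}_B$ to verify that the tight groupoid is Hausdorff, principal, and amenable so that Theorem \ref{maintheo} applies. The paper's own proof is just a one-line version of this same argument, so no further comment is needed.
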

\begin{proof}
This result follows from Theorem \ref{maintheo} because $\gt(S)$ is Hausdorff, amenable, and principal for every AF inverse monoid $S$. 
\end{proof}

\subsection{The $3\times3$ matrices}
This example is a subexample of the previous example, but it will illustrate how we approach the following two examples.

Let $\I_3$ denote the symmetric inverse monoid on the three element set $\{1, 2, 3\}$. This is a Boolean inverse monoid which satisfies condition \eqref{condH}, and we define a map $\pi: \I_3\to \M_3$ by saying that 
\[
\pi(\phi)_{ij} = \begin{cases}1&\text{if }\phi(j) = i\\0&\text{otherwise.}
\end{cases}
\]
Then it is straightforward to verify that $\pi$ is in fact the universal Boolean inverse monoid representation of $\I_3$.

Now instead consider the subset $R_3\subset \I_3$ consisting of the identity, the empty function, and all functions with domain consisting of one element. Then $R_3$ is an inverse monoid, and $\pi(R_3)$ is the set of all matrix units together with the identity matrix and zero matrix. When restricted to $R_3$, $\pi$ is the universal tight representation of $R_3$. Hence $\Ct(R_3)\cong \CB(\I_3)\cong \M_3$. 

There is only one invariant mean $\mu$ on $\I_3$ -- for an idempotent Id$_U\in \I_3$, we have $\mu($Id$_U) = \frac13|U|$. The tight groupoid of $R_3$ is the equivalence relation $\{1, 2, 3\}\times \{1, 2, 3\}$, which is principal -- we also have that $\gt(R_3)^a \cong \I_3$. The unique invariant mean on $\I_3$ is identified with the unique normalized trace on $\M_3$.

Our last two examples follow this mold, where we have an inverse monoid $S$ which generates a C*-algebra $\Ct(S)$, and we relate the traces of $\Ct(S)$ to the invariant means of $\gt(S)^a$. 

\subsection{Self-similar groups}
Let $X$ be a finite set, let $G$ be a group, and let $X^*$ denote the set of all words in elements of $X$, including an empty word $\varnothing$. Let $X^\omega$ denote the Cantor set of one-sided infinite words in $X$, with the product topology of the discrete topology on $X$. For $\alpha\in X^*$, let $C(\alpha) = \{\alpha x\mid x\in X^\omega\}$ -- sets of this type are called {\em cylinder sets} and form a clopen basis for the topology on $X$.

Suppose that we have a faithful length-preserving action of $G$ on $X^*$, with $(g, \alpha)\mapsto g\cdot \alpha$, such that for all $g\in G$, $x\in X$ there exists a unique element of $G$, denoted $\left.g\right|_x$, such that for all $\alpha\in X^*$
\[
g(x\alpha) = (g\cdot x)(\left.g\right|_x\cdot \alpha).
\]
In this case, the pair $(G,X)$ is called a {\em self-similar group}. The map $G\times X\to G$, $(g, x)\mapsto \left.g\right|_x$ is called the {\em restriction} and extends to $G\times X^*$ via the formula
\[
\left.g\right|_{\alpha_1\cdots \alpha_n} = \left.g\right|_{\alpha_1}\left.\right|_{\alpha_2}\cdots \left.\right|_{\alpha_n}
\]
and this restriction has the property that for $\alpha, \beta\in X^*$, we have
\[
g(\alpha\beta) = (g\cdot \alpha)(\left.g\right|_{\alpha}\cdot \beta).
\]
The action of $G$ on $X^*$ extends to an action of $G$ on $X^\omega$ given by
\[
g\cdot (x_1x_2x_3\dots) = (g\cdot x_1)(\left.g\right|_{x_1}\cdot x_2)(\left.g\right|_{x_1x_2}\cdot x_3)\cdots
\]

In \cite{Nek09}, Nekrashevych associates a C*-algebra to $(G, X)$, denoted $\mathcal{O}_{G, X}$, which is the universal C*-algebra generated by a set of isometries $\{s_x\}_{x\in X}$ and a unitary representation $\{u_g\}_{g\in G}$ satisfying
\begin{enumerate}
\item[(i)]$s_x^*s_y = 0$ if $x\neq y$, 
\item[(ii)]$\sum_{x\in X}s_xs_x^* = 1$,
\item[(iii)]$u_gs_x = s_{g\cdot x}u_{\left.g\right|_x}$.
\end{enumerate}

One can also express $\mathcal{O}_{G,X}$ as the tight C*-algebra of an inverse semigroup. Let
\[
S_{G, X} = \{ (\alpha, g, \beta)\mid \alpha,\beta\in X^*, g\in G\}\cup\{0\}.
\]
This set becomes an inverse semigroup when given the operation
\[
(\alpha, g, \beta)(\gamma, h, \nu) = \begin{cases}(\alpha (g\cdot\gamma'), \left.g\right|_{\gamma'}h, \nu), &\text{if }\gamma = \beta\gamma',\\ (\alpha, g(\left.h^{-1}\right|_{\beta'})^{-1}, \nu (h^{-1}\cdot\beta')), & \text{if } \beta = \gamma\beta',\\ 0 &\text{otherwise}\end{cases}
\]
with
\[
(\alpha, g, \beta)^* = (\beta, g^{-1}, \alpha).
\]
Here, $E(S_{X, G}) = \{(\alpha, 1_G, \alpha)\mid \alpha\in X^*\},$ and the tight spectrum $\Et(S_{G, X})$ is homeomorphic $X^\omega$ by the identification
\[
x\in X^\omega \mapsto \{(\alpha, 1_G, \alpha)\in E(S_{G, X})\mid \alpha\text{ is a prefix of }x\} \in \Et(S_{G, X}).
\] 
If $\theta$ is the standard action of $S_{G, X}$ on $\Et(S_{G, X})$, then $D^\theta_{(\alpha, 1_G, \alpha)} = C(\alpha)$. If $s = (\alpha,g, \beta)\in S_{X, G}$, then
\[
\theta_s: C(\beta)\to C(\alpha)
\]
\[
\theta_s(\beta x) = \alpha(g\cdot x)
\]
It is shown in \cite{EP14a} that $\mathcal{O}_{G, X}$ is isomorphic to $\Ct(S_{G, X})$.

We show that the universal tight representation of $S_{G, X}$ is faithful. This will be accomplished if we can show that the map from $S_{G, X}$ to $\gt(S_{G, X})^a$ given by
\[
s \mapsto \Theta(s, D^\theta_{s^*s})
\]
is injective. If $s = (\alpha, g, \beta)$, then 
\[
\Theta(s, D^\theta_{s^*s}) = \{ [(\alpha, g, \beta), \beta x] \mid x\in X^\omega\}.
\]
It is straightforward that $d(\Theta(s, D^\theta_{s^*s})) = C(\beta)$ and $r(\Theta(s, D^\theta_{s^*s}))= C(\alpha)$. Suppose we have another element $t = (\gamma, h, \eta)$ such that $\Theta(s, D^\theta_{s^*s}) = \Theta(t, D^\theta_{t^*t})$. Since these two bisections are equal, their sources (resp. ranges) must be equal, so $C(\beta) = C(\eta)$ (resp. $C(\alpha) = C(\gamma)$). Hence, $\alpha = \gamma$ and $\beta = \eta$. Since $r$ and $d$ are both bijective on these slices, we must have that for all $\beta x \in C(\beta)$,  $\alpha (g\cdot x) = \alpha(h\cdot x)$. Hence for all $x\in X^\omega$, we must have that $g\cdot x = h\cdot x$. The action of $G$ on $X^*$ is faithful, so the induced action of $G$ on $X^\omega$ is also faithful, hence $g = h$ and so $t = s$. 

As it stands, the Boolean inverse monoid $\gt(S_{G,X})^a$ cannot have any invariant means.  This is because the subalgebra of $\mathcal{O}_{G, X}$ generated by $\{s_x\mid x\in X\}$ is isomorphic to the Cuntz algebra $\mathcal{O}_{|X|}$, and a trace on $\mathcal{O}_{G, X}$ would have to restrict to a trace on $\mathcal{O}_{|X|}$, which is purely infinite and hence has no traces. 

To justify the inclusion of this example in this paper about invariant means, we restrict to an inverse subsemigroup of $S_{G, X}$ whose corresponding ample semigroup will admit an invariant mean. Let
\[
S^{=}_{G, X} = \{ (\alpha, g, \beta)\in S_{G, X}\mid |\alpha| = |\beta|\}\cup\{0\}.
\]
One can easily verify that this is closed under product and involution, and so is a inverse subsemigroup of $S_{G, X}$, with the same set of idempotents as $S_{G, X}$. If $\alpha, \beta\in X^*$, $|\alpha| = |\beta|$, and $g\in G$, then 
\[
(\alpha, g, \beta)^*(\alpha, g, \beta) = (\beta, 1_G, \beta), \hspace{1cm}(\alpha, g, \beta)(\alpha, g, \beta)^* = (\alpha,1_G, \alpha).
\]
If $\mu$ were an invariant mean on $\gt(S^=_{G, X})^a$, then we would have to have, for all $\alpha, \beta\in X^*$ and $|\alpha| = |\beta|$, that $\mu(C(\alpha)) = \mu(C(\beta))$. Moreover, for a given length $n$, the set $\{C(\alpha)\mid |\alpha| = n\}$ forms a disjoint partition of $X^\omega$, and so we must have 
\begin{equation}\label{meanonSSG}
\mu(C(\alpha)) = |X|^{-|\alpha|}.
\end{equation}
Any clopen subset of $X^\omega$ must be a finite disjoint union of cylinders. Hence the map $\mu$ on $E(\gt(S^=_{G, X})^a)$ determined by \eqref{meanonSSG} is an invariant mean, and is in fact the unique invariant mean on $\gt(S^=_{G, X})^a$. 

In the general case, it is possible for $\gt(S^=_{G, X})$ to be neither Hausdorff nor principal. We now give an explicit example where we get a unique trace to go along with our unique invariant mean.

\begin{ex}{\bf (The 2-odometer)}

Let $X = \{0, 1\}$ and let $\ZZ = \left\langle z\right\rangle$ be the group of integers with identity $e$ written multiplicatively. The {\em 2-odometer} is the self-similar group $(\ZZ, X)$ determined by
\[
z\cdot 0 = 1\hspace{1cm} \left.z\right|_0 = e
\]
\[
z\cdot 1 = 0\hspace{1cm} \left.z\right|_1 = z.
\]
If one views a word $\alpha\in X^*$ as a binary number (written backwards), then $z\cdot \alpha$ is the same as 1 added to the binary number for $\alpha$, truncated to the length of $\alpha$ if needed. If such truncation is not needed, $\left.z\right|_\alpha = e$, but if truncation is needed, $\left.z\right|_\alpha = z$.

The action of $\ZZ$ on $\{0,1\}^\omega$ induced by the 2-odometer is the familiar Cantor minimal system of the same name. For $x\in \{0,1\}^\omega$ we have
\[
z\cdot x = \begin{cases}000\cdots & \text{if }x_i = 1\text{ for all }i\\
                        00\cdots01x_{i+1}x_{i+2}\cdots & \text{if }x_i = 0 \text{ and }x_j = 1 \text{ for all }j< i
\end{cases}
\]
This action of $\ZZ$ is {\em free} (i.e. $z^n\cdot x = x$ implies $n =0$) and {\em minimal} (i.e. the set $\{z^n\cdot x\mid n\in\ZZ\}$ is dense in $\{0,1\}^\omega$ for all $x\in \{0,1\}^\omega$). 

\begin{lem}
The groupoid of germs $\gt(S^=_{\ZZ, X})$ is principal.
\end{lem}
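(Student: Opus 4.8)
The plan is to check directly that every isotropy group of $\gt(S^{=}_{\ZZ,X})$ is trivial. Since $S^{=}_{\ZZ,X}$ has the same idempotent semilattice as $S_{\ZZ,X}$, its tight spectrum is again $X^\omega$ and the unit space of $\gt(S^{=}_{\ZZ,X})$ is $X^\omega$; I will write germs as $[s,w]$ for $w\in X^\omega$, as in the discussion above. So I would fix $w\in X^\omega$, suppose that $[s,w]$ lies in the isotropy group at $w$ with $s=(\alpha,g,\beta)\in S^{=}_{\ZZ,X}$ (so $|\alpha|=|\beta|$ and $g\in\ZZ$), and show that $[s,w]$ is the unit at $w$.

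First I would unwind the source and range conditions. The condition $d([s,w])=w$ forces $w\in D^\theta_{s^*s}=C(\beta)$, i.e.\ $\beta$ is a prefix of $w$; write $w=\beta x$. Then $r([s,w])=\theta_s(w)=\alpha(g\cdot x)$, so $r([s,w])=w$ reads $\alpha(g\cdot x)=w$. Since $\alpha$ and $\beta$ are both prefixes of $w$ with $|\alpha|=|\beta|$, we get $\alpha=\beta$; hence $w=\alpha x$ and $g\cdot x=x$. Writing $g=z^n$, freeness of the $\ZZ$-action on $X^\omega$ induced by the $2$-odometer gives $n=0$, so $g=e$, the identity of $\ZZ$. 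Thus $s=(\alpha,e,\alpha)\in E(S^{=}_{\ZZ,X})$, so $[s,w]$ is the unit at $w$. As $w\in X^\omega$ was arbitrary, $\gt(S^{=}_{\ZZ,X})$ is principal.

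I do not expect a genuine obstacle. The only substantive input is the freeness of the odometer action on $X^\omega$, which is recorded just above, and the one essential observation is that restricting to triples with $|\alpha|=|\beta|$ is exactly what forces $\alpha=\beta$ and collapses the germ to a unit; for the full $S_{\ZZ,X}$ this step breaks, which is why $\gt(S_{\ZZ,X})$ need not be principal. The only point requiring a little care is the germ bookkeeping, but no appeal to the germ equivalence relation beyond its definition is needed, since the chosen representative $s$ turns out to be idempotent outright.
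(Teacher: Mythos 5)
Your proof is correct, and it takes a cleaner route than the paper's. The paper verifies the equivalent statement that any two germs with the same source \emph{and the same range} coincide: it takes $[(\alpha,z^n,\beta),x]$ and $[(\gamma,z^m,\eta),x]$ both mapping $x$ to $y$, splits into cases according to which of $\beta,\eta$ is a prefix of the other, extracts $\nu=z^m\cdot\delta$ and $z^n=\left.z^m\right|_\delta$ from freeness, and then exhibits an idempotent $(\beta,e,\beta)$ witnessing the germ equivalence. You instead verify the definition of principality literally --- triviality of each isotropy group --- by taking a single germ $[(\alpha,g,\beta),w]$ with $d=r=w$; here the hypothesis $|\alpha|=|\beta|$ immediately forces $\alpha=\beta$ (two equal-length prefixes of the same infinite word), freeness of the odometer action kills $g$, and the representative is outright idempotent, so no manipulation of the germ relation is needed at all. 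The two formulations are equivalent (two arrows $x\to y$ differ by an element of the isotropy at $x$), so nothing is lost; what your version buys is the elimination of the prefix case analysis and of the computation with restrictions $\left.z^m\right|_\delta$, at the cost of relying on the reader to recall that trivial isotropy is exactly the definition of principal used in the paper. Your closing remark correctly identifies the role of the constraint $|\alpha|=|\beta|$ as the point where the argument would fail for the full semigroup $S_{\ZZ,X}$.
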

\begin{proof}
Take $x,y\in \{0,1\}^\omega$ and suppose that we have $\alpha, \beta\in \{0,1\}^*$ with $|\alpha| = |\beta|$ and $n\in \ZZ$ such that $[(\alpha, z^n, \beta), x]\in \gt(S^=_{\ZZ, X})$ and $r([(\alpha, z^n, \beta), x]) = y$. This implies that $x = \beta v$ for some $v\in \{0,1\}^\omega$, and that $y = \alpha (z^n\cdot v)$. Suppose we can find another germ from $x$ to $y$, that is, suppose we have $\gamma, \eta\in \{0,1\}^*$ with $|\gamma| = |\eta|$ and $m\in \ZZ$ such that $[(\gamma, z^n, \eta), x]\in \gt(S_{\ZZ, X})$ and $r([(\gamma, z^n, \eta), x]) = y$. Again we can conclude that $x = \eta u$ for some $u\in \{0,1\}^\omega$, and that $y = \gamma (z^m\cdot u)$. There are two cases.

Suppose first that $\beta = \eta\delta$ for some $\delta\in \{0,1\}^*$.   Then $\eta\delta v = x = \eta u$, and so $\delta v = u$. We also have $\alpha (z^n\cdot v) = y = \gamma (z^m\cdot u) $. Because $|\alpha|=|\beta| \geq |\eta| = |\gamma|$, this implies that there exists $\nu\in \{0,1\}^*$ with $|\nu| = |\delta|$ and $\alpha = \gamma\nu$. Hence $\nu (z^n\cdot v) = z^m\cdot u = (z^m\cdot \delta)\left.z^m\right|_\delta \cdot v$, which gives us that $\nu = z^m\cdot \delta$ and $z^n\cdot v = \left.z^m\right|_\delta\cdot v$, and since the action on $\{0,1\}^\omega$ is free we have $z^n = \left.z^m\right|_\delta$. 

So we have that $x\in C(\beta) = D^\theta_{(\beta, e, \beta)}$, and we calculate
\begin{eqnarray*}
(\gamma, z^n, \eta)(\beta, e, \beta) &=& (\gamma(z^m\cdot \delta), \left.z^m\right|_\delta, \beta) = (\gamma\nu, z^n, \beta) = (\alpha, z^n, \beta) \\
&=& (\alpha, z^n, \beta)(\beta, e, \beta)
\end{eqnarray*}
where the first equality is by the definition of the product. Hence $[(\alpha, z^n, \beta), x] = [(\gamma, z^n, \eta), x]$. The case where $\beta$ is shorter than $\eta$ is similar. Hence, $\gt(S^=_{\ZZ, X})$ is principal.
\end{proof}
It is routine to check that $S^=_{\ZZ, X}$ satisfies condition \eqref{condH} (in fact, it is E*-unitary, see \cite[Example 3.4]{ES16}). The groupoid $\gt(S^=_{\ZZ, X})$ is amenable, see \cite[Proposition 5.1.1]{AR00} and \cite[Corollary 10.18]{EP13}. Hence Theorem \ref{maintheo} applies, and there is only one normalized trace on $\Ct(S^=_{\ZZ, X})$, the one arising from the invariant mean.

As the observant reader is no doubt aware at this point, $\Ct(S^=_{\ZZ, X})$ is nothing more than the crossed product $C(\{0,1\}^\omega)\rtimes \ZZ$ arising from the usual odometer action \cite[Theorem 7.2]{Nek04}, which has a unique normalized trace due to the dynamical system $(\{0,1\}^\omega, \ZZ)$ having a unique invariant measure (given by \eqref{meanonSSG}).
\end{ex}

\subsection{Aperiodic tilings}

We close with another example where the traces on the relevant C*-algebras are known beforehand, and hence give us invariant means. 

A {\em tile} is a closed subset of $\RR^d$ homeomorphic to the closed unit ball. A {\em partial tiling} is a collection of tiles in $\RR^d$ with pairwise disjoint interiors, and the {\em support} of a partial tiling is the union of its tiles. A {\em patch} is a finite partial tiling, and a {\em tiling} is a partial tiling with support equal to $\RR^d$. If $P$ is a partial tiling and $U\subset \RR^d$, then let $P(U)$ be the partial tiling of all tiles in $P$ which intersect $U$. A tiling $T$ is called {\em aperiodic} if $T+x \neq T$ for all $0\neq x\in \RR^d$.

Let $T$ be a tiling. We form an inverse semigroup $S_T$ from $T$ as follows. For a patch $P\subset T$ and tiles $t_1, t_2\in P$ we call the triple $(t_1, P, t_2)$ a {\em doubly pointed patch}. We put an equivalence relation on such triples, by saying that $(t_1, P, t_2)\sim (r_1, Q, r_2)$ if there exists a vector $x\in \RR^d$ such that $(t_1+x, P+x, t_2+x)= (r_1, Q, r_2)$, and let $[t_1, P, t_2]$ denote the equivalence class of such a triple -- this is referred to a {\em doubly pointed patch class}. Let
\[
S_T = \{[t_1, P, t_2] \mid (t_1, P, t_2)\text{ is doubly pointed patch }\}\cup \{0\}
\]
be the set of all doubly pointed patch classes together with a zero element. If $[t_1, P, t_2],[r_1, Q, r_2]$ are two elements of $S_T$, we let
\[
[t_1, P, t_2][r_1, Q, r_2] = \begin{cases}
[t_1, P\cup Q', r_2']& \text{if there exists }(r_1', Q', r_2')\in [r_1, Q, r_2]\\
 &  \text{such that }r_1' = t_2 \text{ and }P\cup Q' \text{ is }\\
 & \text{a patch in }T+x\text{ for some }x\in \RR^d\\
0 & \text{otherwise},
\end{cases}
\]
and define all products involving 0 to be 0. Also, let $[t_1, P, t_2]^* = [t_2, P, t_1]$. With these operations, $S_T$ is an inverse semigroup. This inverse semigroup was defined by Kellendonk \cite{KelCoin} \cite{Kel97}, and is E*-unitary.

Suppose there exists a finite set $\p$ of tiles each of which contain the origin in the interior such that for all $t\in T$, there exists $x_t\in\RR^d$ and $p\in \p$ such that $t = p+ x_t$. In this case, $\p$ is called a set of {\em prototiles} for $T$. By possibly adding labels, we may assume that $x_t$ and $p$ are unique -- we call $x_t$ the {\em puncture} of $t$. Consider the set
\[
X_T = \{T-x_t\mid t\in T\}
\]
and put a metric on $X_T$ by setting
\[
d(T_1, T_2) = \inf\{1, \epsilon\mid T_1(B_{1/\epsilon}(0)) = T_1(B_{1/\epsilon}(0))\}
\]
and let $\Op$ denote the completion of $X_T$ in this metric (above, $B_r(x)$ denotes the open ball in $\RR^d$ of radius $r$ around $x\in \RR$). One can show that all elements of $\Op$ are tilings consisting of translates of $\p$ which also contain an element of $\p$ and that the metric above extends to the same metric on $\Op$ -- this is called the {\em punctured hull} of $T$. 

We make the following assumptions on $T$:

\begin{enumerate}
\item $T$ has {\em finite local complexity} if for any $r>0$, there are only finitely many patches in $T$ with supports having outer radius less than $r$, up to translational equivalence. 
\item $T$ is {\em repetitive} if for every patch $P\subset T$, there exists $R>0$ such that every ball of radius $R$ in $\RR^d$ contains a translate of $P$.
\item $T$ is {\em strongly aperiodic} if all elements of $\Op$ are aperiodic.
\end{enumerate}
In this case $\Op$ is homeomorphic to the Cantor set. For a patch $P\subset T$ and tile $t\in P$, let
\[
U(P, t) = \{T'\in \Op\mid P - x_t\subset T'\}.
\]
Then these sets are clopen in $\Op$ and generate the topology. Let
\[
\Rp = \{(T_1, T_1 + x)\in \Op\times\Op\mid x\in \RR^d\}
\]
and view this equivalence relation as a principal groupoid. Endow it with the topology inherited by viewing it as a subspace of $\Op\times \RR^d$. For a patch $P\subset T$ and $t_1, t_2\in P$, let
\[
V(t_1, P, t_2) = \{(T_1, T_2)\in \Rp\mid T_1\in U(P, t_1), T_2 = T_1 + x_{t_1}- x_{t_2}\}
\]
Then these sets are compact bisections in $\Rp$, and generate the topology on $\Rp$. This groupoid is Hausdorff, ample, and amenable \cite{PS99}. The C*-algebra of $\Rp$ was defined by Kellendonk in \cite{KelNCG} (denoted there $A_T$) and studied further in \cite{KP00}, \cite{Pu00}, \cite{Put10}, \cite{Ph05}, \cite{St12}.

\begin{figure}
\includegraphics[scale=0.52]{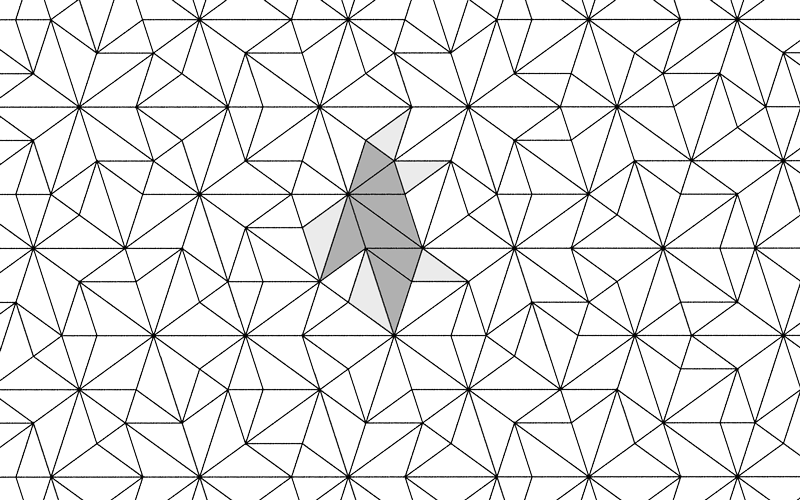}
\caption{In the Robinson triangles version of the Penrose tiling, each triangle is always next to a similar triangle with which it forms a rhombus. Let $P$ be the dark gray patch, and let $P'$ be the patch with the lighter gray tiles added. Then for any dark gray tile $t$, $U(P, t) = U(P', t)$}
\centering
\end{figure}

We proved in \cite[Theorem 3]{EGS12} that $\gt(S_T)\cong \Rp$ -- the universal tight representation of $S_T$ maps $[t_1, P, t_2]$ to the characteristic function of $V(t_1, P, t_2)$.\footnote{The same result follows from \cite[Section 9]{L08} combined with the fact that Lenz's groupoid coincides with the tight groupoid when $\Et(S) = \Eu(S)$, see \cite[Theorem 5.15]{LL13}.} It is interesting to note that in this case that the universal tight representation may not be faithful. Suppose that we could find $P\subset P'$, both patches in $T$, and that $P+x\subset T$ can only happen if $P'+x\subset T$. Then for a tile $t\in P$, the two idempotents $[t, P, t], [t, P', t]$ are different elements in $S_T$, but are both mapped to the characteristic function of $U(P,t) = U(P',t)$ under the universal tight representation -- indeed, $[t, P', t]$ is dense in $[t, P, t]$, see Figure 1. We note that \cite{L08}, \cite{Ex09}, and \cite{LL13} address other cases where the tight representation may not be faithful.

The C*-algebra $A_T$ can be seen as the C*-algebra of a Boolean inverse monoid, namely $\gt(S_T)^a$ -- one could then rightly call this the {\em Boolean inverse monoid associated to $T$}. The traces of $A_T$ are already well-studied, see \cite{KP00}, \cite{Pu00}. Often, as is the case with the Penrose tiling, there is a unique trace, see \cite{Pu00}. 

\begin{theo}
Let $T$ be a tiling which satisfies conditions 1--3 above, and let $\gt(S_T)^a$ be the Boolean inverse monoid associated to $T$. Then $M(\gt(S_T)^a)\cong T(A_T)\cong IM(\Rp)$.
\end{theo}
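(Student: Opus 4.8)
The plan is to apply Theorem~\ref{maintheo} to the Boolean inverse monoid $S:=\gt(S_T)^a$ and then translate the conclusion into a statement about $A_T$. First I would record the relevant groupoid identifications. Since $\Op$ is homeomorphic to the Cantor set, $\Rp^{(0)}=\Op$ is compact, so $\gt(S_T)^a$ is genuinely a Boolean inverse \emph{monoid}. By \cite[Theorem 3]{EGS12} we have $\gt(S_T)\cong\Rp$, hence $\gt(S_T)^a\cong\Rp^a$ as Boolean inverse monoids, and by the noncommutative Stone duality \eqref{NCstone} applied to the ample groupoid $\gt(S_T)$,
\[
\gt(S)=\gt\!\big(\gt(S_T)^a\big)\cong\gt(S_T)\cong\Rp .
\]

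Next I would check the three hypotheses of Theorem~\ref{maintheo}. The groupoid $\Rp$ is Hausdorff, principal, ample and amenable by \cite{PS99}. Since being Hausdorff is exactly condition~\eqref{condH} for $S$ (\cite[Theorem 3.16]{EP16}) and $\gt(S)\cong\Rp$, $S$ satisfies \eqref{condH}; $\gt(S)\cong\Rp$ is principal; and amenability of $\Rp$ gives weak containment, so combining $C^*(\Rp)\cong\Cr(\Rp)$ with Theorem~\ref{BIMtight} (which yields $\CB(S)\cong C^*(\gt(S))\cong C^*(\Rp)$) we get $\Cr(\gt(S))\cong\CB(S)$. Therefore Theorem~\ref{maintheo} applies and gives
\[
T(\CB(S))\cong M(S)\cong IM(\gt(S))\cong IM(\Rp).
\]

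It then remains to identify $\CB(S)$ with $A_T$: using Theorem~\ref{BIMtight} and the isomorphisms above, $\CB(\gt(S_T)^a)\cong C^*(\gt(S_T))\cong C^*(\Rp)\cong\Cr(\Rp)=A_T$, the last steps by amenability of $\Rp$ and Kellendonk's definition of $A_T$. Substituting $S=\gt(S_T)^a$ yields $M(\gt(S_T)^a)\cong T(A_T)\cong IM(\Rp)$, as claimed. Alternatively one can bypass Theorem~\ref{maintheo} entirely: the corollary to Proposition~\ref{etanu} applied to the ample Hausdorff groupoid $\Rp$ gives $M(\Rp^a)\cong IM(\Rp)$, while Theorem~\ref{trace-measure} applied to the Hausdorff principal \'etale groupoid $\Rp$ (with compact unit space) gives $T(\Cr(\Rp))\cong IM(\Rp)$, and since $M(\gt(S_T)^a)=M(\Rp^a)$ and $\Cr(\Rp)=A_T$ the chain of isomorphisms follows.

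I do not expect a genuine obstacle here: the statement is a matter of assembling facts already in hand — the isomorphism $\gt(S_T)\cong\Rp$ of \cite{EGS12}, noncommutative Stone duality, and the dynamical properties of $\Rp$ recorded in \cite{PS99} — rather than of proving anything new. The only thing requiring care is bookkeeping: keeping straight which of the three groupoids ($\gt(S_T)$ built from the tiling inverse semigroup, its iterate $\gt(\gt(S_T)^a)$, and $\Rp$) is meant at each step, so that the Hausdorffness, principality, and amenability of $\Rp$ are fed into exactly the hypotheses they are needed for.
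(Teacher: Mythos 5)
Your proof is correct and is essentially the argument the paper intends: the theorem is stated as a direct assembly of the identification $\gt(S_T)\cong\Rp$ from \cite{EGS12}, the Hausdorffness, principality, and amenability of $\Rp$ from \cite{PS99}, and the general results (Theorem \ref{maintheo}, or equivalently the corollary to Proposition \ref{etanu} together with Theorem \ref{trace-measure}). Your care in distinguishing $\gt(S_T)$, $\gt(\gt(S_T)^a)$, and $\Rp$ via noncommutative Stone duality is exactly the bookkeeping required.
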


{\bf Acknowledgment}: I thank Thierry Giordano for an enlightening conversation about this work, and I also thank Ganna Kudryavtseva, Ruy Exel, and Daniel Lenz for comments and suggestions on a preprint version of this paper. Finally, I thank the referee for a thorough reading.

\bibliographystyle{alpha}
\bibliography{C:/Users/Charles/Dropbox/Research/bibtex}{}

{\small 
\textsc{University of Ottawa, Department of Mathematics and Statistics. 585 King Edward, Ottawa, ON, Canada, K1N 6N5} \texttt{cstar050@uottawa.ca} 
}
\end{document}